\newcommand{\define}[1]{\textbf{#1}}
\newcommand{\R}{\mathbb{R}}
\newcommand{\eich}{H}
\newcommand{\Htx}{\eich(t,x)}
\newcommand{\ef}{F}
\newcommand{\Ft}{\ef(t)}
\newcommand{\Ce}{C}
\newcommand{\CEHF}{\mathcal{CE}_{\eich,\ef}(\rho_0,\rho_1)}
\newcommand{\AFF}{\mathcal{A}}
\newcommand{\AHFeq}{\AFF^{\text{eq}}}
\newcommand{\AHFineq}{\AFF^{\text{ineq}}}
\newcommand{\Meas}{\mathcal{M}}
\newcommand{\Mplus}{\Meas^+}
\newcommand{\MplushC}{\Meas^+_{\eich,C}(\Omega)}
\newcommand{\WFRAF}{{\textrm{WFR}_{\delta}^{\eich,\ef}}}
\newcommand{\rhosq}{(\rho_0,\rho_1)^2}
\newcommand{\AHF}{\AFF(\eich,\ef)}
\newcommand{\Gc}{\mathcal{G}_c}
\newcommand{\prox}{\textrm{prox}}
\title{A Benamou--Brenier proximal splitting method for constrained unbalanced optimal transport
}
\author{Mao Nishino
\and
Martin Bauer
\and
Tom Needham
  \thanks{Department of Mathematics, Florida State University, Tallahassee, FL 32306, USA
  (\email{mnishino@fsu.edu}, \email{mbauer2@fsu.edu}, \email{tneedham@fsu.edu}).}
\and Nicolas Charon
  \thanks{Department of Mathematics, University of Houston, Houston, TX 77204, USA
  (\email{ncharon@central.uh.edu}).}
}
\begin{document}
\maketitle
\begin{abstract}
The dynamic formulation of optimal transport, also known as the Benamou-Brenier formulation, has been extended to the unbalanced case by introducing a source term in the continuity equation. When this source term is penalized based on the Fisher-Rao metric, the resulting model is referred to as the Wasserstein-Fisher-Rao (WFR) setting, and allows for the comparison between any two positive measures without the need for equalized total mass. In  recent work we introduced a constrained variant of this model, in which affine integral equality constraints are imposed along the measure path. In the present paper, we propose a further generalization of this framework, which allows for constraints that apply not just to the density path but also to the momentum and source terms, and incorporates affine inequalities in addition to equality constraints. We prove, under suitable assumptions on the constraints, the well-posedness of the resulting class of convex variational problems. The paper is then primarily devoted to developing an effective numerical pipeline that tackles the corresponding constrained optimization problem based on finite difference discretizations and parallel proximal schemes. Our proposed  framework encompasses standard balanced and unbalanced optimal transport, as well as a multitude of natural and practically relevant constraints, and we highlight its versatility via several synthetic and real data examples.
\end{abstract}
\tableofcontents
\section{Introduction}
Optimal transport (OT) is a powerful mathematical framework for comparing probability distributions over a fixed metric space~\cite{villani2009optimal,peyre2019computational}. Classical formulations focus on the Wasserstein distances, a family of metrics that compare probability measures while incorporating the geometry of the underlying space. The theory of unbalanced optimal transport (UOT)  generalizes this framework by enabling the comparison of positive measures of potentially different total masses~\cite{chizat2018interpolating, ChizatEtAl2018JFA, PiccoliRossi2014,LieroMielkeSavare2016,LieroMielkeSavare2018Invent}. This flexibility is particularly useful in applications where differences in total population are important; for example, UOT has applications to the modeling of cell population dynamics~\cite{Schiebinger2019OptimalTransport}, image processing and color transfer~\cite{nguyen2023unbalanced}, crowd counting~\cite{Ma2021Learning} and domain adaptation in machine learning~\cite{fatras2021unbalanced}. Moreover, UOT can be useful even when total population differences are not explicitly important, as the ability to create or destroy mass leads to improved robustness to outlier noise~\cite{sejourne2019sinkhorn,wang2024outlierrobust}.
Intuitively, the dynamic formulation of UOT expands on the Benamou-Brenier characterization of classical OT~\cite{benamou_computational_2000}, thereby achieving the comparison of two measures by joining them via a path in the space of measures which minimizes a certain kinetic energy functional. From this perspective, it is natural in applications that one may wish to impose additional constraints on the mass transport path. For example, in population dynamics modeling, one may have knowledge of the total population over time, which can be formulated as a total mass constraint over the transport path. A particular instance of this constraint is that the total mass should remain constant while still allowing for local mass creation/destruction; this leads to the spherical Hellinger-Kantorovich (SHK) metric framework, introduced in~\cite{laschos2019geometric} and implemented via a deep learning approach in~\cite{jing2024machine}, with applications to generative sampling. Alternatively, one may wish to constrain the spatial characteristics of measure trajectories in the UOT framework. For example, applications to robotics~\cite{ding2025swarmdiffswarmrobotictrajectory} require the modeling of obstacles in the environment which must be avoided along trajectories. As a final example, when describing crowd motion~\cite{butazzo2009, maury2010macroscopic, kerrache2013optimal,ruthotto2020machine}, limiting congestion can be modeled via different types of constraints, one of which being to upper bound the density in a particular region.
Inspired by the potential applications described above, a constrained unbalanced optimal transport model was considered in our previous work \cite{bauer2025path}, where we studied the theoretical aspects of the UOT problem with affine constraints on the mass density; we will refer to this framework as ACUOT, which stands for affine-constrained unbalanced optimal transport. To provide an informal overview of our formulation in \cite{bauer2025path}, let $\Omega \subset \R^n$ be a compact domain and let $\rho_0,\rho_1$ be two positive densities on $\Omega$. In dynamic UOT, one considers a path of densities $\rho:[0,1] \times \Omega$, a path of momentum vector fields $\omega:[0,1] \times \Omega \to \R^n$ (describing the transport dynamics), and a path of source terms $\zeta:[0,1] \times \Omega \to \R$ (capturing the creation or destruction of mass). Given a balance parameter $\delta \geq 0$, the UOT problem then seeks to minimize the Wasserstein-Fisher-Rao energy \cite{chizat2018interpolating}
\begin{equation}\label{eqn:informal_acuot}
    \int_0^1 \int_\Omega \frac{\|\omega(t,x)\|^2 + \delta^2 \zeta(t,x)^2}{2\rho(t,x)} \; dx \, dt,
\end{equation}
subject to the continuity equation
\begin{equation}\label{eqn:informal_continuity_equation}
    \partial_t \rho + \nabla \cdot \omega = \zeta,
\end{equation}
over all triples $(\rho,\omega,\zeta)$ satisfying the given initial data $\rho(0,\cdot)=\rho_0$, $\rho(1,\cdot) = \rho_1$. In the ACUOT problem, we additionally introduce an affine constraint on the density path
\begin{equation}\label{eqn:informal_constraints}
    \int_\Omega H(t,x) \; \rho(t,x) \; dx = F(t) \quad \mbox{for a.e.~$t \in [0,1]$},
\end{equation}
where $H:[0,1] \times \Omega \to \R^d$ and $F:[0,1] \to \R^d$ are given functions that define the affine constraint. In this formulation, the main contribution of \cite{bauer2025path} was to show the existence of minimizers for the ACUOT problem under suitable technical assumptions on the constraint functions $H$ and $F$. However, the above formulation lacks two important generalizations that are relevant in practice. First, the constraints are only imposed on the density path $\rho$, whereas it is natural in certain applications to impose constraints on the control variables as well; e.g., in modeling population dynamics, we may want to restrict the growth rate of the population (source term, $\zeta$) or limit the direction of mass movement (momentum, $\omega$). Second, the constraints are only affine integral equalities, while one may wish to instead impose inequality constraints; e.g., upper bounding the total population over time. Lastly, \cite{bauer2025path} only focused on the theoretical aspects of the model, such as the well-posedness of the corresponding variational problems, but did not address the numerical solution to the ACUOT problem in practice.
\subsection{Main contributions}
The goal of this paper is to address the limitations of the ACUOT model described above. Our main contributions are described as follows.
\noindent {\bf Generalized constrained UOT model.}
Including the control terms, we generalize the ACUOT model of \cite{bauer2025path} by considering the UOT problem described by \eqref{eqn:informal_acuot} and \eqref{eqn:informal_continuity_equation}, but with additional constraints of a form more general than \eqref{eqn:informal_constraints}. Namely, for $i = 1, \ldots, d$, we impose
\begin{align}\label{eqn:generalized_informal_acuot3}
    \int_\Omega H^{\rho}_i(t,x) \; \rho(t,x) \; dx &+ \int_\Omega H^{\omega}_i(t,x) \cdot \omega(t,x)dx\\  &\qquad \qquad + \int_\Omega H^{\zeta}_i(t,x) \; \zeta(t,x)dx \leq F_i(t) \quad \mbox{for a.e.~$t \in [0,1]$}.
\end{align}
 Here, $(H^{\rho}_i, H^{\omega}_i, H^{\zeta}_i):[0,1] \times \Omega \to \R\times \mathbb{R}^n \times \mathbb{R}$ are given functions that define the constraints on the density, momentum and source term, respectively, and $F_i:[0,1] \to \R$ is a given function that defines the bounds on the affine functional. A formal definition of this extended model is given in Section \ref{sec:generalized_model}. The main contribution of this part of  the article can be found in Theorem \ref{thm:main_theorem}, where we show the existence of minimizers for this generalized constrained UOT problem under suitable technical assumptions on the constraint functions $H^{\rho}_i, H^{\omega}_i, H^{\zeta}_i$ and $F$. 
\noindent {\bf A discrete model.}
To develop a numerical method for solving the generalized constrained UOT problem, we first discretize its associated energy. This is achieved by employing a standard grid on the time--space domain $[0,1] \times \Omega$ and using finite-difference schemes to 
approximate the continuity equation. We then show in Theorem~\ref{thm:existence_discrete_wfr}, that the resulting discrete 
problem admits minimizers, under suitable assumptions, just as in the continuous setting.
\noindent {\bf Numerical framework for the generalized model.}
Finally, we use the above described discretization to develop a parallelizable numerical framework for solving the generalized constrained UOT problem. To make use of the convexity of the problem, we employ a convex optimization approach to solve the resulting discrete problem. In particular, we use the proximal splitting algorithm as done by \cite{chizat2018interpolating} but rely on the parallel proximal algorithm \cite{combettes-pesquet-ppxa-2008,combettes-pesquet-signal-2011} instead of the Douglas-Rachford algorithm used in the previous work. This algorithm allows us to incorporate multiple constraints easily and to parallelize the computation of the proximal operators. The previously obtained existence results for the discretized energy can be used to show the convergence of the algorithm, cf. Remark~\ref{rem:convergence}. Finally we  demonstrate the versatility of our numerical framework on several synthetic examples as well as real-world data on population dynamics.
\subsection{Relationship to existing literature} 
We first re-emphasize that the present paper deals with constraints for dynamic OT, in contrast with other lines of work which have instead introduced constraints on the transport plans in the static Kantorovich setting, such as capacity constraints~\cite{korman2015optimal,tang2024sinkhorn}, martingale constraints~\cite{ekren2018constrained,dolinsky2014martingale}, or constraints designed to handle Gaussian mixtures~\cite{delon2020wasserstein,wilson2023wasserstein}. Several past works have considered different constraints within dynamic formulations of optimal transport and proposed numerical approaches for such problems, although primarily in the balanced setting \cite{papadakis2014optimal,Wan2023ScalableDynamicOT,kerrache2022constrained,WuRantzer_OMT_Nonlinear_InputDensity_2024}. In the unbalanced case, \cite{jing2024machine} develops a deep-learning-based algorithm for the specific SHK model. For more general constraints,  \cite{neklyudov2024wlf} and \cite{wan2024neural} both propose algorithms based on neural network architectures and with soft constraints enforced via penalty terms. All these works are geared towards applications to flow matching and sampling problems in high dimension, and, despite arguably being scalable for such purposes, they typically fall out of the convex problems and optimization landscape, thus lacking clear theoretical convergence guarantees. Our work, on the other hand, focuses on a particular family of hard constraints---namely, affine integral equality/inequality constraints---which preserve the convexity of the original formulation of UOT. This allows us to provide conditions for the well-posedness of the resulting variational problems, and leverage convex optimization to derive a numerical solver with actual convergence properties. However, this makes the proposed approach better suited for optimal transport of density measures on low-dimensional domains. 
\subsection{Outline of the paper}
The structure of the paper is as follows. In Section~\ref{sec:background}, we give a brief review of the theoretical perspective on the unbalanced optimal transport problem and the constrained model of \cite{bauer2025path}. Section \ref{sec:generalized_model} introduces the generalized constrained model and includes the main existence theorem. We present our numerical framework to solve the generalized constrained UOT problem in Section \ref{sec:numerics}. In Section \ref{sec:experiments}, we provide examples of applications to our numerical algorithm to both synthetic and real-world data. The paper concludes with a short discussion of potential future research directions in Section \ref{sec:conclusion}. Proofs of theoretical results and additional algorithmic and experimental details are included in the Supplementary Material.
\section{Background: Dynamic Optimal Transport and the Wasserstein-Fisher-Rao metric}
\label{sec:background}
In this section, we will give a brief review of dynamic (unbalanced) optimal transport and, in particular, of the constrained model introduced in~\cite{bauer2025path}. For more details we refer to~\cite[Section 2 and 3]{bauer2025path}, from which the notation and most of the definitions of this section are taken.
\subsection{Dynamic optimal transport and the Wasserstein distance} From here on let $\Omega \subset \R^n$ denote a fixed compact set with $C^1$ boundary and let $\rho_0$ and $\rho_1$ be probability measures on $\Omega$. To simplify the presentation we assume that each of the measures $\rho_i$ has density $u_i:\Omega \to \R$ with respect to the Lebesgue measure on $\R^n$. In this setting, the Benamou-Brenier formulation~\cite{benamou_computational_2000} of optimal transport consists of the optimization problem 
\begin{equation}\label{eqn:dynamic_ot}
\inf_{u,v} \int_0^1 \int_\Omega \frac{\|v(t,x)\|^2}{u(t,x)} \; dx \, dt,
\end{equation}
where the infimum is taken over all 
paths of densities $u:[0,1] \times \Omega \to \R$ and time-dependent vector fields $v:[0,1] \times \Omega \to \R^n$ subject
to the \define{continuity equation}
\begin{equation}\label{eqn:continuity_equation}
\partial_t u + \nabla \cdot v = 0, \qquad u(0,\cdot) = u_0, \; u(1,\cdot) = u_1.
\end{equation}
The seminal results of Benamou and Brenier~\cite{benamou_computational_2000}
state that this formulation of optimal transport is equivalent to the standard Kantorovich formulation and thus the square root of the optimal energy is the so-called \define{Wasserstein distance}. Note that this formulation directly leads to a (formal) Riemannian interpretation of the Wasserstein-distance, where the corresponding energy Lagrangian is given by~\eqref{eqn:dynamic_ot}; the corresponding (formal) Riemannian metric is often called the \textbf{Otto metric}~\cite{OttoPic}. 
\subsection{The Fisher-Rao metric and the Hellinger distance}
Another approach to metrizing the space of (probability) densities is given by the Hellinger distance, which has its origins in the field of information geometry~\cite{amari2016information}. It is the geodesic distance of a Riemannian metric called the Fisher-Rao metric and was originally introduced to compare elements of finite-dimensional submanifolds of probability densities in the context of statistics~\cite{radhakrishna1945information}, but has since then been extended to the infinite dimensional setting; see, e.g.,~\cite{friedrich1991fisher,ay2017information,bauer2016uniqueness}. While the Wasserstein distance (Otto metric, respectively) is defined for probability densities, the Hellinger distance (Fisher-Rao metric, respectively) can be naturally defined on the space of all densities (without a mass preservation constraint).
To define this distance, we restrict ourselves to the space of all measures $\rho$ that have a positive density $u:\Omega \to \R_{>0}$ with respect to the Lebesgue measure.
Given two such densities $u_0,u_1:\Omega \to \R$, the \define{Hellinger distance} between them is given by
\begin{equation}\label{eqn:fisher_rao_distance}
\inf_{u,w} \int_0^1 \int_\Omega \frac{w(t,x)^2}{u(t,x)} \; dx \, dt, 
\end{equation}
where the infimum is over sufficiently smooth paths of densities $u:[0,1] \times \Omega \to \R$ and functions $w:[0,1] \times \Omega \to \R$, 
subject to the \define{continuity equation}
\begin{equation}\label{eq:continuity:Fisher-Rao}
\partial_t u = w, \qquad u(0,\cdot) = u_0, \; u(1,\cdot) = u_1.
\end{equation}
Note that the energy Lagrangian~\eqref{eqn:fisher_rao_distance} is again quadratic and thus it  implicitly defines a Riemannian metric on the space of positive densities; this is exactly the {\bf Fisher-Rao  metric}. 
\subsection{Unbalanced dynamic optimal transport and the Wasserstein-Fisher-Rao distance}
In the \emph{unbalanced optimal transport} framework~\cite{chizat2018interpolating}, the approaches described above (dynamic optimal transport and Fisher-Rao geometry) are combined to define a new metric on the space of all densities.  The main idea behind this framework is to combine the continuity equations of optimal transport and information geometry to obtain
\begin{equation}\label{eqn:modified_continuity_eqn}
    \partial_t u + \nabla \cdot v = w,
\end{equation}
where $w:[0,1] \times \Omega \to \R$ is interpreted as a source term, which allows the creation and destruction of mass, and where $\nabla \cdot v$ is the transport term.  
When expressed in the form \eqref{eqn:modified_continuity_eqn} (\eqref{eqn:continuity_equation} or \eqref{eq:continuity:Fisher-Rao}, respectively), the continuity equation inherently assumes that the measure $\rho$ admits a differentiable density. As shown in \cite{chizat2018interpolating}, this requirement can be relaxed by instead formulating the equation in its weak (distributional) sense. To introduce the Wasserstein-Fisher-Rao Lagrangian, which serves as the basis of the constrained model studied in this article, we will first recall this notion in the following definition.
\begin{definition}[Distributional Continuity Equation]
Let $\rho_0,\rho_1 \in \Mplus(\Omega)$. We say that a triple $(\rho,\omega,\zeta)\in \Mplus([0,1]\times \Omega)\times \Meas([0,1]\times \Omega)^n\times \Meas([0,1]\times \Omega)$ satisfies the \define{continuity equation in the distributional sense} with boundary conditions $\rho_0$ and $\rho_1$ if, for any $\phi \in C^1([0,1]\times \Omega)$, 
\begin{equation}\label{eqn:dist_continuity_equation}
    \int_{[0,1]\times \Omega} \partial _t \phi d\rho + \int_{[0,1]\times \Omega} \nabla \phi \cdot d\omega  +\int_{[0,1]\times \Omega}\phi  d\zeta = \int_{\Omega}\phi(1,\cdot)d\rho_1 - \int_{\Omega}\phi(0,\cdot)d\rho_0 
\end{equation}
We denote by $\mathcal{CE}(\rho_0,\rho_1)$ the set of triples $(\rho,\omega,\zeta)$ satisfying the continuity equation with boundary conditions $\rho_0,\rho_1$.
\end{definition}
The unbalanced optimal transport distance of \cite{chizat2018interpolating} is then defined as follows. 
\begin{definition}[WFR distance]\label{def:wfr_distance}
    For $\delta>0$, the \define{Wasserstein-Fisher-Rao infinitesimal cost} is the function $f_\delta:\mathbb{R}\times \mathbb{R}^n\times \mathbb{R} \to [0,+\infty]$ defined by 
    \begin{equation}
        f_{\delta}(\rho,\omega,\zeta) \coloneqq \begin{cases}
            \frac{\|\omega\|^2+\delta^2\zeta^2}{2\rho}, & \rho>0 \\ 
            0, & (\rho,\omega,\zeta)=(0,0,0) \\
            +\infty, & \textrm{otherwise.} 
        \end{cases}
    \end{equation}
    The associated \define{Wasserstein-Fisher-Rao (WFR) distance} is then defined by
    \begin{equation}\label{eqn:WFR_distance}
    \mathrm{WFR}_\delta(\rho_0,\rho_1)^2 \coloneqq \inf \left\{ \int_{[0,1]\times \Omega} f_\delta\left(\frac{d\mu}{d\lambda}\right)d \lambda \middle| 
    \begin{aligned}
         \mu = (\rho,\omega,\zeta)\in \mathcal{CE}(\rho_0,\rho_1) 
    \end{aligned}
    \right\},
\end{equation}
where $\lambda$ is any nonnegative measure on $[0,1]\times \Omega$ such that $\rho,\omega,\zeta\ll \lambda$ and $\frac{d\mu}{d\lambda} \coloneqq (\frac{d\rho}{d\lambda}, \frac{d\omega}{d\lambda}, \frac{d\zeta}{d\lambda})$. We note that the value does not depend on the choice of $\lambda$ due to the 1-homogeneity of $f_{\delta}$, cf.~\cite[Definition 2.2]{bauer2025path}.
\end{definition}
\begin{remark}
Considering $\rho$ as an element of $\mathcal{M}^+([0,1] \times \Omega)$, one loses the dynamical interpretation of $\rho$ as a ``path of measures'', but this interpretation is recovered via the concept of disintegration, i.e.,  one can \define{disintegrate} the measure $\nu$ on $[0,1] \times \Omega$ as a Borel family of measures $\{\nu_t\}_{t \in [0,1]}$ such that $d\nu(t,x) = dt \otimes d\nu_t(x)$; cf.~\cite[Proposition~2.4]{bauer2025path}. In the following, we will write in short $\nu = dt \otimes \nu_t$ and we will frequently abuse terminology and refer to $\mu$, $\rho$, $\omega$, and $\zeta$ as \emph{paths of measures}, or simply as \emph{paths}.
\end{remark}
\subsection{Constrained unbalanced dynamic optimal transport}\label{sec:oldmodel}
Finally, we revisit the constrained, unbalanced optimal transport framework introduced in our earlier work~\cite{bauer2025path}. This framework will then be broadened and generalized in the subsequent sections of this article. 
The idea of the constrained model of~\cite{bauer2025path} was to restrict the set of admissible paths in the dynamic formulation of unbalanced optimal transport to satisfy certain (time dependent) integral constraints on the densities $\rho$. The full model is introduced in the following definition:
\begin{definition}[WFR problem with time-varying constraints]\label{def:wfr_problem_time_varying}
For functions $\eich:[0,1] \times \Omega \to \R^d$ and $\ef:[0,1] \to \R^d$, we say that a measure $\rho \in \Mplus([0,1] \times \Omega)$ \define{satisfies an affine constraint} with respect to $\eich$ and $\ef$ if it disintegrates in time ($\rho = dt \otimes \rho_t$) and 
\begin{equation}
\int_\Omega \Htx d\rho_t(x) = \Ft \qquad \mbox{for a.e.~$t \in [0,1]$}.
\end{equation}
In this case, we write $\rho \in \AHF$. For $\rho_0, \rho_1\in \Mplus(\Omega)$, we consider the constrained set of measures
\begin{equation}\label{eqn:first_constraint_set}
\CEHF\coloneqq \left\{\mu = (\rho,\omega,\zeta) \in \mathcal{CE}(\rho_0,\rho_1) \mid \rho \in \AHF\right\}.
\end{equation}
We define the  \textbf{Wasserstein-Fisher-Rao energy with time-varying affine constraints} by
\begin{equation}
\label{eqn: constWFR_affine}
\WFRAF\rhosq = \inf \left\{ \int_{[0,1]\times \Omega} f_\delta\left(\frac{d\mu}{d\lambda}\right)d \lambda \middle| 
    \begin{aligned}
         \mu = (\rho,\omega,\zeta)\in \CEHF
    \end{aligned}
    \right\},
\end{equation}
 where $\lambda$ is any nonnegative Borel measure on $[0,1]\times \Omega$ such that $\rho,\omega, \zeta \ll \lambda$. By the homogeneity of $f_\delta$, the integral in the definition does not depend on the choice of $\lambda.$ 
\end{definition}
\begin{remark}
If the constraints do not depend on time, i.e., $\eich(t,x)=\eich(x)$ and $\Ft=C>0$, then one can view the constrained Wasserstein-Fisher-Rao energy as an energy Lagrangian on the constrained space of measures 
\begin{equation}
    \MplushC \coloneqq \left\{\rho \in \Mplus(\Omega)\middle| \int_{\Omega} \eich \; d\rho = \Ce\right\},
\end{equation}
and, under additional assumptions on $\eich$, this defines a distance function on this constrained space of measures, cf.~\cite[Theorem~3.8]{bauer2025path}.
\end{remark}
The main result of our previous article~\cite{bauer2025path} established the existence of minimizers to~\eqref{eqn: constWFR_affine} assuming that the problem is feasible, i.e., assuming that there exist finite energy paths that satisfy the constraints, cf.~\cite[Theorem~4.3]{bauer2025path}. In the next section, we will significantly broaden this constrained model by allowing constraints not only on the density path, but also on the paths of controls. The main theoretical result of the present article will then show the existence of minimizers for this extended model.
\section{Generalized Model for Constrained Unbalanced Optimal Transport}
\label{sec:generalized_model}
In this section, we aim to introduce a generalization of the constrained model of~\cite{bauer2025path}, as presented in Section~\ref{sec:oldmodel}. We are specifically interested in
generalizing the allowable constraints in two different directions, while preserving the convexity properties of the resulting variational problems:
\begin{enumerate}
    \item We will consider constraints that operate on the control variables $(\omega,\zeta)$, in addition to the density $\rho$.
    \item We will allow constraints that involve affine inequalities instead of just equalities.
\end{enumerate}
This section formalizes this extended framework and generalizes the main existence result of \cite{bauer2025path} to this setting.
\subsection{Generalized constraint model}
 We begin by introducing a new constrained space of measure triplets 
 \[
 (\rho,\omega,\zeta) \in \Mplus([0,1] \times \Omega)\times \Meas([0,1] \times\Omega)^n\times \Meas([0,1] \times\Omega).
 \]
\begin{definition}
Given functions $F:[0,1] \rightarrow \R$, $H^\rho:[0,1] \times \Omega \rightarrow \R$, $H^\omega:[0,1]\times \Omega \rightarrow \R^n$, and $H^\zeta:[0,1]\times \Omega \rightarrow \R$, letting $H=(H^\rho,H^\omega,H^\zeta)$, we say that $\mu=(\rho,\omega,\zeta)$ belongs to the \define{affine equality constraint space} $\AHFeq(\eich,\ef)$ if the three measures $\rho,\omega$ and $\zeta$ disintegrate in time ($\rho=dt \otimes \rho_t$, $\omega = dt \otimes \omega_t$, and $\zeta = dt \otimes \zeta_t$) and 
    \[
    \int_{\Omega} H^{\rho}(t,x) d\rho_t(x) + \int_\Omega H^\omega(t,x) \cdot d\omega_t(x) + \int_\Omega H^\zeta(t,x) d\zeta_t(x) = F(t) \;\mbox{for a.e.~$t \in [0,1]$}.
    \]
    Similarly, we say that $\mu=(\rho,\omega,\zeta)$ belongs to the \define{affine inequality constraint space} $\AHFineq(\eich,\ef)$ if the three measures $\rho,\omega$ and $\zeta$ disintegrate in time and
    \[
    \int_{\Omega} H^{\rho}(t,x) d\rho_t(x) + \int_\Omega H^\omega(t,x) \cdot d\omega_t(x) + \int_\Omega H^\zeta(t,x) d\zeta_t(x) \leq F(t) \; \mbox{for a.e.~$t \in [0,1]$}.
    \]  
\end{definition}
We can now introduce a generalized version of the constrained unbalanced optimal transport problem as follows.
\begin{definition}
\label{def:generalized_constrained_WFR}
For $d\in\mathbb N$ consider functions
 $H_i=(H^\rho_i,H^\omega_i,H^\zeta_i):[0,1]\times \Omega \rightarrow \R \times \R^n \times \R$ and $F_i:[0,1]\rightarrow \R$, for $i=1,\ldots,d$. For $\rho_0,\rho_1 \in \Meas^+(\Omega)$, and another integer $d'$ such that $0\leq d' \leq d$, we define the \define{set of constrained paths} from $\rho_0$ to $\rho_1$ by
\begin{align}
\CEHF= \{\mu \in \mathcal{CE}(\rho_0,\rho_1) \ | \ &\mu \in \AHFineq(H_i,F_i) \ \ \text{for } i=1,\ldots,d', \mathrm{and} \\
&\quad \mu \in \AHFeq(H_i,F_i) \ \ \text{for } i=d'+1,\ldots,d\}. 
\end{align}
Note that, with a slight abuse of notation, we recycle notation from \eqref{eqn:first_constraint_set} and we suppress the dependency in the index $d'$, both for the sake of simplicity. We then define the \define{Wasserstein-Fisher-Rao energy} with the affine inequality and equality constraints given by $H$ and $F$ similar to \eqref{eqn:WFR_distance} (once again recycling notation):
\begin{equation}
\label{eq:constrained_WFR_extended}
    \WFRAF(\rho_0,\rho_1)^2 = \inf \left\{ \int_{[0,1]\times \Omega} f_\delta\left(\frac{d\mu}{d\lambda}\right)d \lambda \ \middle| \ 
    \begin{aligned}
         \mu = (\rho,\omega,\zeta)\in \CEHF
    \end{aligned}
    \right\}.
\end{equation}
\end{definition}
In other words, \eqref{eq:constrained_WFR_extended} corresponds to minimizing the Wasserstein-Fisher-Rao cost over paths $\mu$ that satisfy a set of $d'$ integral affine inequality constraints and $d-d'$ integral affine equality constraints. 
\begin{remark}
Note that we can recover the setting of~\cite{bauer2025path}, as described in Section~\ref{sec:oldmodel}, by restricting to spaces $\AHFeq$ in which  $d'=0$ and $H^\omega_i=0$, $H^\zeta_i=0$ for all $i=1,\ldots,d$. On the other hand, setting $H^\rho=0$ leads to constraints that only operate on the control measures $\omega$ and $\zeta$. 
\end{remark}
\subsection{Existence of minimizers}
We may now generalize the optimal path existence result~\cite[Theorem 4.3]{bauer2025path} to this new setting.
\begin{theorem}\label{thm:main_theorem}
Let $\rho_0,\rho_1 \in \Meas^+(\Omega)$ and $H:[0,1]\times \Omega \rightarrow (\R \times \R^n \times \R)^d$, $F:[0,1]\rightarrow \R^d$ be continuous functions. If the problem \eqref{eq:constrained_WFR_extended} is feasible, i.e., there exists $\mu \in \CEHF$ with finite WFR cost, then an optimal path exists.    
\end{theorem}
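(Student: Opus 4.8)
The plan is to proceed by the direct method of the calculus of variations, closely following the strategy used to prove \cite[Theorem~4.3]{bauer2025path} for the density-only equality-constrained problem and adapting it to accommodate the new constraints on the controls $\omega,\zeta$ together with the inequality constraints. Write $m \coloneqq \WFRAF(\rho_0,\rho_1)^2$, which is finite by the feasibility assumption, and pick a minimizing sequence $\mu_k = (\rho_k,\omega_k,\zeta_k) \in \CEHF$ with energies $E_k \to m$.

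First I would establish uniform a priori bounds on the total masses $|\rho_k|$, $|\omega_k|$, $|\zeta_k|$ on $[0,1]\times\Omega$. From the definition of $f_\delta$ and a Cauchy--Schwarz argument (using that finite energy forces $\omega_k,\zeta_k \ll \rho_k$) one gets $|\omega_k| \le \sqrt{2E_k\,|\rho_k|}$ and $\delta\,|\zeta_k| \le \sqrt{2E_k\,|\rho_k|}$. Testing the continuity equation against spatially constant functions $\phi(t,x)=g(t)$ shows that the time-marginal mass $t \mapsto (\rho_k)_t(\Omega)$ is governed by $\rho_0(\Omega)$ and the source, so that $|\rho_k| \le \rho_0(\Omega) + |\zeta_k|$; combined with the previous estimate this yields a self-improving quadratic inequality in $\sqrt{|\rho_k|}$ that bounds all three masses uniformly in $k$. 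Since $\mathcal{M}([0,1]\times\Omega)$ is the dual of $C([0,1]\times\Omega)$, the Banach--Alaoglu theorem then lets me extract a (not relabeled) subsequence with $\rho_k \rightharpoonup \rho$, $\omega_k \rightharpoonup \omega$, $\zeta_k \rightharpoonup \zeta$ weakly-$*$.

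Next I would check that the limit $\mu=(\rho,\omega,\zeta)$ lies in $\CEHF$. The distributional continuity equation \eqref{eqn:dist_continuity_equation} is linear in $\mu$ and pairs it against the fixed continuous functions $\partial_t\phi$, $\nabla\phi$, $\phi$, so it passes to the weak-$*$ limit and $\mu \in \mathcal{CE}(\rho_0,\rho_1)$. For the affine constraints I would reformulate them in integrated form: for $\psi \in C([0,1])$,
\begin{equation*}
\int_{[0,1]\times\Omega}\psi\, H^\rho_i\,d\rho + \int_{[0,1]\times\Omega}\psi\, H^\omega_i\cdot d\omega + \int_{[0,1]\times\Omega}\psi\, H^\zeta_i\,d\zeta \;\le\; \int_0^1 \psi(t)F_i(t)\,dt,
\end{equation*}
where for the inequality constraints $i \le d'$ one tests against $\psi \ge 0$ and keeps ``$\le$'', while for the equality constraints $i > d'$ one tests against arbitrary $\psi$ and replaces ``$\le$'' by ``$=$''. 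Since each $H_i$ is continuous and $\Omega$ is compact, every integrand $\psi\, H^{\bullet}_i$ is continuous on $[0,1]\times\Omega$, so these relations pass to the weak-$*$ limit; the fundamental lemma of the calculus of variations then recovers the pointwise a.e.-in-$t$ constraints for $\mu$.

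Finally, weak-$*$ lower semicontinuity of $\mu \mapsto \int f_\delta(d\mu/d\lambda)\,d\lambda$ follows from the convexity, lower semicontinuity and $1$-homogeneity of $f_\delta$ via the Reshetnyak lower semicontinuity theorem, exactly as in the unconstrained WFR setting of \cite{chizat2018interpolating}; this gives $\int f_\delta(d\mu/d\lambda)\,d\lambda \le \liminf_k E_k = m$, and since $\mu$ is admissible the inequality must be an equality, so $\mu$ is a minimizer. I expect the main obstacle to be the step of passing the constraints to the limit: the constraint space requires the time-disintegrations $\rho=dt\otimes\rho_t$, $\omega=dt\otimes\omega_t$, $\zeta=dt\otimes\zeta_t$, and absolute continuity of time-marginals is not preserved under weak-$*$ convergence in general. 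The resolution is that finite WFR energy forces the relevant time-marginals to be (uniformly) absolutely continuous, a regularity fact established in \cite{bauer2025path}; this equi-regularity is what legitimizes both the integrated reformulation above and the recovery of the a.e.-in-$t$ constraints, and verifying it now for the controls $\omega,\zeta$, and not only for $\rho$, is the technical heart of the argument.
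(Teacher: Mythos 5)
Your proposal is correct in substance, but it takes a genuinely different route from the paper. The paper does not use the direct method at all: it proves Theorem \ref{thm:main_theorem} via the Fenchel--Rockafellar duality theorem, mirroring \cite[Theorem~4.3]{bauer2025path}. Concretely, it sets up a primal problem $\inf_{x \in X} \mathcal{F}(Ax) + \mathcal{G}(x)$ over test functions $x = (\phi,\psi) \in C^1([0,1]\times\Omega) \times C([0,1])^d$, where $\mathcal{F}$ is the integral indicator of pointwise membership in the paraboloid set $B_\delta$ (whose conjugate is $f_\delta$), the operator is $A(\phi,\psi) = \bigl(\partial_t\phi + \sum_i H_i^\rho \psi_i,\ \nabla\phi + \sum_i H_i^\omega \psi_i,\ \phi + \sum_i H_i^\zeta \psi_i\bigr)$, and $\mathcal{G}$ collects the boundary terms and the linear terms in $F_i$, with the inequality constraints encoded by restricting the multipliers $\psi_i$, $i \leq d'$, to be nonpositive via the indicator $\iota_{s \preceq 0}\bigl(\sup_t \psi'(t)\bigr)$; strong duality then yields a maximizer in $Y^* \cong \mathcal{M}([0,1]\times\Omega)^{1+n+1}$, and the computation showing $\mathcal{G}^*(-A^*y^*)$ equals the indicator of $\CEHF$ identifies that maximizer with a minimizer of \eqref{eq:constrained_WFR_extended}. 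What each approach buys: the duality argument gives existence together with strong duality (no gap) and a dual formulation aligned with the proximal numerical scheme, whereas your direct method is more elementary and self-contained, resting on standard mass bounds, Banach--Alaoglu, linearity of the continuity equation and integrated constraints under weak-$*$ limits, and Reshetnyak lower semicontinuity. Two refinements to your writeup: first, the logical order should be to obtain finite energy of the limit from Reshetnyak lower semicontinuity \emph{before} invoking disintegrability, since it is finite energy plus the continuity equation that guarantees all three limit measures disintegrate in time (the paper cites \cite[Propositions~2.2 and~2.6]{Bredies_2020} for exactly this fact in the proof of Lemma~\ref{thm:symmetric_solution}), and only then can your fundamental-lemma step recover the a.e.-in-$t$ constraints; second, the equi-regularity of the controls is not really the technical heart you anticipate, because once $\rho = dt \otimes \rho_t$ disintegrates, the absolute continuity $\omega, \zeta \ll \rho$ forced by finite energy makes the disintegrations of $\omega$ and $\zeta$ immediate.
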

The proof of this Theorem, which is postponed to Supplementary Material~\ref{sec:proof_existence_cont}, follows a similar proof strategy as in~\cite{bauer2025path}, based again on the use of the Fenchel-Rockafellar theorem (see~\cite{borwein2005techniques}). While the overall strategy is the same, this generalized setting requires more detailed analysis to handle the inequality constraints.
\begin{remark}[Feasibility of the constrained problem]\label{rem:feasibility}
In order to obtain the real existence of minimizers to \eqref{eq:constrained_WFR_extended}, one still needs the problem to be \textit{feasible} in the sense that there exists a finite energy path in $\CEHF$. For the more restricted constrained model discussed in Section~\ref{sec:oldmodel}, this can be shown for several classes of constraints, cf.~\cite[Theorems 3.5, 4.7 and 4.6]{bauer2025path}. For the more general model considered in this article, a comprehensive result regarding feasibility seems out of reach due to the increased technicality of this question in this extended context. However, for the specific examples presented in Section~\ref{sec:experiments}, the feasibility always holds. Indeed, in the experiments of Sections~\ref{sec:experiments:mass}, \ref{subsec:barrier_constraint},~\ref{subsec:convex_curve_constraint} and~\ref{ssec:experiments:population}, this is guaranteed by the results of~\cite[Theorems~4.6 and 4.7]{bauer2025path}. For the example of Section~\ref{ssec:experiments:flow}, it also holds as the constraint is only on the momentum variable $\omega$ and thus a pure Fisher-Rao geodesic (with no transport) gives rise to a finite energy path satisfying the constraints. Lastly, for the example studied in Section~\ref{ssec:experiments:budget}, which involves constraints on the source term $\zeta$,  the feasibility follows by constructing a transport path and combining this with adapting the total mass using the source term $\zeta$ in regions where the constraint function $H^{\zeta}$ is zero.
\end{remark}
\section{Discrete problem and algorithmic approach}\label{sec:numerics}
In order to derive a numerical approach to tackle the constrained UOT model of Section \ref{sec:generalized_model}, with provable convergence guarantees, we shall adapt the algorithm of \cite{chizat2018interpolating} in the unconstrained setting, which is itself based on the Douglas-Rachford (DR) splitting scheme \cite{douglas1956numerical,glowinski1975approximation}. In the following sections, we will describe the chosen discretization in more detail, as well as our proposed numerical framework; pseudocode for the algorithm can be found in Supplementary Material~\ref{appendix:algo} and an open source implementation is available at our GitHub repository:  \url{https://github.com/mao1756/acuot}.
\subsection{Discretized problem}
Throughout this section, we consider the time domain to be $[0,1]$ and the space domain $\Omega$ to be \(\Omega=[0, L]\) where $L$ is the length of the domain in the spatial dimension. We consider the $0$th dimension as the time dimension and the $1$st dimension as the spatial dimension. Furthermore, letting \(N_0, N_1\) be the number of discretization steps in time and space, respectively, we shall use uniform discretization steps of the form \(h_0=1/N_0\) and \(h_1=L/N_1\).
We first consider the discretization of the domain. Rather than discretizing all continuous variables $(\rho,\omega,\zeta)$ of the variational problem on a single time-space grid, we will consider, following the idea first proposed in \cite{papadakis2014optimal}, two types of grids: namely, a centered grid and staggered grids, where staggered grids are shifted by half a cell in a particular dimension. Specifically, the centered grid \(\mathcal{G}_c\) is defined as
\[
\mathcal{G}_c=\Big\{(t_{j_0},x_{j_1}) :\ t_{j_0}=\tfrac{j_0+\tfrac12}{N_0},\ \ 
x_{j_1}= \tfrac{(j_1+\tfrac12)L}{N_1}\Big\}
\]
while the staggered grids for time and space dimensions are then given respectively by:
\begin{equation}
\begin{aligned}
&\mathcal{G}_s^t
= \Big\{(t_{j_0},x_{j_1}) :\ t_{j_0} = \tfrac{j_0}{N_0},\ 
x_{j_1} = \tfrac{(j_1+\frac12)L}{N_1},\ \ j_0\in\llbracket 0,N_0\rrbracket,\ 
j_1\in\llbracket 0,N_1-1\rrbracket\Big\},\\
&\mathcal{G}_s^{x} = \Big\{(t_{j_0},x_{j_1}) :\ t_{j_0}=\tfrac{j_0+\tfrac12}{N_0},\
x_{j_1}=\tfrac{j_1 L}{N_1},\ j_0 \in \llbracket 0, N_0 - 1\rrbracket,\ j_1\in\llbracket 0,N_1\rrbracket\Big\}.
\end{aligned}
\end{equation}
We shall then consider centered variables $V=(\rho,\omega,\zeta) \in \mathcal{E}_c\coloneqq \mathbb{R}^{\mathcal{G}_c}\times \mathbb{R}^{\mathcal{G}_c} \times \mathbb{R}^{\mathcal{G}_c}$ where each of the three components is defined on the centered grid, as well as staggered variables $U=(\bar\rho,\bar\omega,\bar\zeta)\in \mathcal{E}_s\coloneqq\mathbb{R}^{\mathcal{G}_s^t}\times \mathbb{R}^{\mathcal{G}_s^{x}}\times \mathbb{R}^{\mathcal{G}_c}$ where the density and momentum field are sampled on the corresponding staggered grids. With a slight abuse of notations, we will write $\rho_{j_0,j_1},\omega_{j_0,j_1},\zeta_{j_0,j_1}$ to denote the values of $\rho,\omega$ and $\zeta$ sampled at $(t_{j_0},x_{j_1}) \in \mathcal{G}_c$ and similarly for the staggered variables $\bar\rho,\bar\omega,\bar\zeta$. The idea behind the use of these different types of grids and variables is that it will provide a more natural way to evaluate the discrete derivatives involved in the continuity equation, as explained in the next subsection. Note that the source term is always defined on the centered grid since no derivative of $\zeta$ is involved in the problem formulation. 
One can convert a staggered variable to a centered variable via the interpolation operator \(I:\mathcal{E}_s \to \mathcal{E}_c\) which is defined as $I(U) = (I_\rho\bar\rho, I_\omega\bar\omega,\bar\zeta)$ where
\begin{align}
(I_\rho\bar\rho)_{j_0,j_1} &= \tfrac12\big(\bar\rho_{j_0,j_1}+\bar\rho_{j_0+1,j_1}\big),\\
(I_{\omega}\bar\omega)_{j_0, j_1} &= \tfrac12\big(\bar\omega_{j_0,j_1}+\bar\omega_{j_0,j_1+1}\big)
\end{align}
This allows us to then define the consistency condition between the staggered and centered representations as \(V=I(U)\). We note that the above definitions can be easily modified for periodic boundary conditions (in space), by simple cyclic wrapping of the indices.
Based on the previous definitions, we now examine the discretization of the continuity equation $\partial_t \rho + \textrm{div }\omega = \zeta$. First, we define the time-space divergence operator \(\textrm{div}\). For a staggered variable \(U=(\bar\rho,\bar\omega, \bar{\zeta})\), \(\textrm{div}: \mathcal{E}_s \to \mathbb{R}^{\mathcal{G}_c}\) is defined by forward face-to-cell differences:
\begin{align}
(\textrm{div}\,U)_{j_0, j_1}
&= \frac{\bar\rho_{j_0+1,j_1}-\bar\rho_{j_0,j_1}}{h_0}
 +\frac{\bar\omega_{j_0,j_1+1}-\bar\omega_{j_0,j_1}}{h_1}.
\end{align}
The discrete continuity equation can then be written as \((\textrm{div} - s_z)(U) =0\) where \(s_z\) is the auxiliary operator that extracts the source term from a staggered variable, i.e. \(s_z(U) = \bar{\zeta}\). In addition, one also needs to enforce the initial and terminal conditions on the density as well as the no-flux (Neumann) spatial boundary conditions on the momentum. To this end, we introduce a boundary operator $b:\mathcal{E}_s\to \mathbb{R}^{2N_0+2N_1}$ that extracts the corresponding boundary values of \(\bar\rho\) and \(\bar\omega\):
\begin{equation}
  b(U) = \begin{pmatrix}
    \bar\rho_{0,j_1} & \bar\rho_{N_0,j_1} & \bar\omega_{j_0,0} & \bar\omega_{j_0,N_1}
  \end{pmatrix}
\end{equation}
and express the boundary conditions as \(b(U)=b_0\) where \(b_0 = (\rho_0, \rho_1, 0, 0)\), with $\rho_0,\rho_1$ being the initial and final discrete densities on the centered grid. Note that the above corresponds to imposing Neumann boundary conditions in space. As already mentioned previously, this can be easily adapted to periodic spatial boundary conditions by removing the no-flux constraints and instead applying cyclic index wrapping.  
We are now just left with the discrete version of the WFR energy and the path equality and inequality constraints. Based on Definition \ref{def:wfr_distance}, for a centered variable \(V=(\rho,\omega,\zeta)\), the contribution of each time-space cell \(j=(j_0,j_1)\) to the whole WFR cost can be simply taken as $f_\delta(\rho_j,\omega_j,\zeta_j)$. The full cost function is then computed as the discretized time-space integral, i.e.
\begin{equation} \label{eq:discrete_WFR_cost}
    J(V) \coloneqq \sum_{j\in\mathcal{G}_c} f_\delta(\rho_j,\omega_j,\zeta_j)h_0 h_1
\end{equation}
We note that we use the scaling property \cite[Proposition 3.1]{chizat2018interpolating} to reduce the problem to the case $\delta=1$ if we are given values of $\delta$ other than $1$.  Now, when it comes to the additional path constraints on $\rho,\omega,\zeta$, these will be defined on the centered grid via the discretized functions $H^{\rho}_i,H^{\omega}_i,H^{\zeta}_i \in \mathbb{R}^{\Gc}$ for $i=1,\ldots,d$, with $d$ being again the total number of equality and inequality constraints. Slightly departing from the continuous formulation of Section \ref{sec:generalized_model}, we represent every constraint (inequality and equality) at each time as a box constraint given by the lower and upper bound variables \(\ell_i \in \overline{\mathbb{R}}^{N_0}\) and $u_i \in \overline{\mathbb{R}}^{N_0}$ with \(\ell_i\le u_i\) component wise. We can then introduce the following constraint operators and constraint sets:
\begin{align}
\mathcal{H}_i(t_{j_0};V)
&\coloneqq\big\langle (H^{\rho}_i)_{j_0,\cdot},\rho_{j_0,\cdot}\big\rangle
 +  \big\langle (H^{\omega}_i)_{j_0,\cdot},\omega_{j_0,\cdot}\big\rangle
 + \big\langle (H^{\zeta}_i)_{j_0,\cdot},\zeta_{j_0,\cdot}\big\rangle,
\\
\mathcal{C}_i&\coloneqq\{V:\ \ell_{i,j_0}\le \mathcal{H}_i(t_{j_0};V)\le u_{i,j_0}\ \forall j_0\in \llbracket0, N_0 - 1\rrbracket\}.
\end{align}
where the bracket notation stands for the discretized $L^2$ inner product on $[0,L]$, for which we use the simple numerical integration scheme $\langle v,w \rangle = \sum_{j_1=0}^{N_1-1} v_{j_1} w_{j_1} h_1$. Note that since we allow the components of $\ell_i$ and $u_i$ to take infinite values and be equal to one another, this setting does indeed provide a discrete equivalent of the integral inequality/equality constrained framework developed in Section \ref{sec:generalized_model}.
We finally arrive at the discretized constrained UOT problem, which we formulate as the following optimization program over a pair $(U,V)$ of centered and staggered variables:  
\begin{equation}\label{eq:disc-constr-wfr}
\min_{U,V}\ J(V)\,
\quad\text{s.t.}\quad
(\textrm{div}-s_z)(U) = 0,\ \ b(U)= b_0,\ \ V=I(U),\ \ V\in \bigcap_{i=1}^d \mathcal{C}_i.
\end{equation}
Using convex indicator functions for all the constraints, we may rewrite the above as
\begin{equation}\label{eq:optimization_target}
\min_{U,V}\ J(V)\;+\;\iota_{\mathcal{CE}}(U)\;+\;\iota_{\{V=I(U)\}}(U,V)\;+\;\sum_{i=1}^d \iota_{\mathcal{C}_i}(V),
\end{equation}
where \(\mathcal{CE}=\{U:(\textrm{div}-s_z)(U) = 0, b(U)= b_0\}\). We note that this is a convex problem. Similar to the continuous setting, the existence of minimizers can be guaranteed under the following basic assumption:
\begin{theorem}
  \label{thm:existence_discrete_wfr}
  There exists a minimizer to the discrete constrained WFR problem (\ref{eq:optimization_target}) provided that the feasible set is nonempty. Moreover, this condition can be checked by solving a second-order cone feasibility problem.
\end{theorem}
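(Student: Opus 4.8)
The plan is to treat \eqref{eq:optimization_target} as the minimization of a single extended-real-valued function
$\Phi(U,V) = J(V) + \iota_{\mathcal{CE}}(U) + \iota_{\{V=I(U)\}}(U,V) + \sum_{i=1}^d \iota_{\mathcal{C}_i}(V)$
on the finite-dimensional space $\mathcal{E}_s \times \mathcal{E}_c$, and to invoke the direct method. First I would record that $\Phi$ is proper (this is exactly the nonemptiness-of-the-feasible-set hypothesis), convex, and lower semicontinuous: the infinitesimal cost $f_\delta$ of Definition~\ref{def:wfr_distance} is a closed proper convex function (the perspective transform of $\tfrac12(\|\omega\|^2+\delta^2\zeta^2)$), so $J$ from \eqref{eq:discrete_WFR_cost} is a finite nonnegative sum of such, while each remaining term is the indicator of a closed convex, indeed polyhedral, set. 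By the Weierstrass theorem in finite dimensions it then suffices to establish that $\Phi$ is coercive, i.e.\ that its sublevel sets are bounded.

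Coercivity I would obtain through a recession-cone argument that exploits the positive $1$-homogeneity of $f_\delta$, by virtue of which its recession function coincides with itself; consequently $J^\infty = J \geq 0$, and, since the feasible set is polyhedral, $\Phi^\infty(dU,dV) = J(dV) + \iota_{R}(dU,dV)$, where $R$ is the recession cone of that set. It then remains to show that the only direction $(dU,dV)\in R$ with $J(dV)\leq 0$ is the zero direction. Here $J(dV)=0$ forces each $f_\delta((d\rho)_j,(d\omega)_j,(d\zeta)_j)=0$, hence the centered momentum and source components of $dV$ vanish and its density component is nonnegative; the consistency condition $dV=I(dU)$ propagates this to $I_\omega\,d\bar\omega=0$ and $d\bar\zeta=0$, and the homogenized no-flux condition $b(dU)=0$ then eliminates the spatially alternating momentum modes, yielding $d\bar\omega\equiv 0$. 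Feeding $d\bar\omega=d\bar\zeta=0$ into the homogenized continuity equation $(\textrm{div}-s_z)(dU)=0$ shows $d\bar\rho$ is constant in time, whereupon the fixed initial and terminal density constraints (again encoded in $b(dU)=0$) force $d\bar\rho\equiv 0$, so $(dU,dV)=0$. This gives $\Phi^\infty(d)>0$ for every $d\neq 0$, hence bounded sublevel sets and the existence of a minimizer. The main obstacle is precisely this step: verifying that the discrete boundary and continuity operators leave no nontrivial zero-energy recession direction, which is where the specific face-to-cell differencing and the staggering of $\bar\rho,\bar\omega$ must be used carefully.

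For the second assertion I would recast the finite-energy condition cell by cell: for a given cell $j$ one has $f_\delta(\rho_j,\omega_j,\zeta_j)<\infty$ if and only if there exists $s_j\geq 0$ with $\rho_j\geq 0$ and $\|(\omega_j,\delta\zeta_j)\|^2\leq 2\rho_j s_j$, which is exactly membership in a rotated second-order cone (the degenerate case $\rho_j=0$ correctly enforcing $\omega_j=\zeta_j=0$, consistent with $f_\delta(0,0,0)=0$). Since all the remaining constraints---the continuity equation, the boundary conditions $b(U)=b_0$, the consistency condition $V=I(U)$, and the box constraints defining the $\mathcal{C}_i$---are affine equalities and inequalities, the feasibility of \eqref{eq:optimization_target} is equivalent to the solvability of this affine system together with one rotated second-order cone constraint per cell. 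This is a second-order cone feasibility problem, which can be checked directly with a standard conic solver, completing the proof.
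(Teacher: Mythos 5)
Your proposal is correct and follows essentially the same route as the paper's proof: a recession (horizon) function argument in which $J^\infty=J$ by the positive $1$-homogeneity of $f_\delta$, the horizon cones of the affine constraint sets are their homogenized versions ($b(dU)=0$, $(\mathrm{div}-s_z)(dU)=0$, $dV=I(dU)$), and the elimination of the alternating momentum modes followed by time-constancy of $d\bar\rho$ shows the only zero-energy recession direction is $(dU,dV)=(0,0)$, giving level-boundedness and hence existence. The only cosmetic difference is in the feasibility reformulation: you use one rotated second-order cone variable $s_j$ per cell, whereas the paper takes a single shared $\tau\geq 0$ and the equivalent standard cone condition $\|(\rho_j,\omega_j,\delta\zeta_j,\tau)\|_2\leq \tau+\rho_j$; both correctly yield a second-order cone feasibility problem.
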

We refer the reader to Supplementary Material \ref{sec:existence_discrete_wfr} for the proof. Similar to the continuous setting and Theorem \ref{thm:main_theorem}, this result ensures the existence of a solution provided that one can find a discrete path with finite energy \eqref{eq:discrete_WFR_cost} satisfying all constraints, which itself can be verified by solving a second-order cone feasibility problem. 
\subsection{Algorithmic approach}
\label{ssec:ppxa_algo}
The Douglas-Rachford splitting algorithm is a generic approach for optimizing the sum of two convex, potentially non-smooth functions. In the context of the present work, having multiple constraints to deal with, we will instead rely on the parallel proximal algorithm (PPXA) \cite{combettes-pesquet-ppxa-2008,combettes-pesquet-signal-2011} which extends the Douglas-Rachford scheme to optimization problems involving the sum of multiple functions. The core idea of PPXA is to rewrite the composite objective function
\begin{equation}
  f_1(x) + f_2(x) + \cdots + f_m(x)
\end{equation}
as $F(x_1, \cdots, x_m) + \iota_{C}(x_1,\cdots ,x_m)$ where $F(x_1, \cdots, x_m) = f_1(x_1) + f_2(x_2) + \cdots + f_m(x_m)$, $C = \{(x_1, \cdots, x_m) | x_1 = x_2 = \cdots = x_m\}$, and $\iota_C$ is the convex indicator function for this set (as we used in the proof of Theorem \ref{thm:main_theorem}). Thus, one can view the objective function as a sum of two convex functions $F$ and $\iota_{C}$, and then apply the Douglas-Rachford splitting to this reformulated problem. This approach is called the parallel proximal algorithm (PPXA) \cite{combettes-pesquet-ppxa-2008,combettes-pesquet-signal-2011} since the proximal operators of each $f_i$ can be computed in parallel.
We will now describe our main PPXA iteration. Letting \(x=(U,V)\) be the current iterate and \(y_k,\pi_k\) auxiliary copies,
we group \(J+\iota_{\mathcal{CE}}\) as block \(f_1\), the consistency constraint \(\{V=I(U)\}\) as block \(f_2\), and the \(d\) box constraints as the blocks \(f_{3},\dots,f_{d+2}\). Then a PPXA step reads
\begin{align}
&\pi_1=\textrm{prox}_{\gamma(J+\iota_{\mathcal{CE}})}(y_1),\qquad
\pi_2=\textrm{prox}_{\iota_{\{V=I(U)\}}}(y_2),\\
&\pi_{i+2}=\textrm{prox}_{\iota_{\mathcal{C}_i}}(y_{i+2})\quad\text{for }i=1,\dots,d,\\
&\bar\pi=\frac{1}{d+2}\Big(\pi_1+\pi_2+\sum_{i=1}^d \pi_{i+2}\Big),\qquad
y_k\leftarrow y_k+\alpha\,(2\bar\pi-x-\pi_k), \\
x&\leftarrow x+\alpha\,(\bar\pi-x).
\end{align}
Here, \(\alpha \in (0, 2), \gamma>0\) are hyperparameters. For pseudocode of the full algorithm, see Supplementary Material~\ref{appendix:algo}.
\begin{remark}[Convergence]\label{rem:convergence}
  The convergence of our iterations to a minimizer follows from the general theory of the Douglas-Rachford algorithm such as \cite{HeYuan2015DRSRate} assuming the feasibility of the problem since the feasibility implies the existence of a minimizer in the discrete problem \eqref{eq:optimization_target} by Theorem \ref{thm:existence_discrete_wfr}. In particular, we have the worst-case $O(1/\sqrt{k})$ decay rate for the fixed point residual by the theory in \cite{HeYuan2015DRSRate}, where $k$ is the number of iterations or $O(1/k)$ error decay in the ergodic sense as follows from the result of \cite{HeYuan2012DRSRate}. Empirically, as we showcase with the experiments of  Section~\ref{sec:convergence_analysis}, we find that the decay rate for the distance from the current to the final iterate is typically sublinear, thus suggesting a potentially faster convergence, in practice, than those theoretical estimates. 
\end{remark}
Finally, we need to specify how to compute each proximal operator. We note that the calculations for the \(\textrm{prox}_{\gamma(J+\iota_{\mathcal{CE}})}\) and \(\textrm{prox}_{\iota_{\{V=I(U)\}}}\) are exactly the same as in~\cite{chizat2018interpolating}, so we omit the details here. The new part is the projection onto the (possibly multiple) affine box constraints, which we now describe. For clarity, we shall omit the constraint index $i$ and consider only a single box constraint set: 
\begin{equation}
\mathcal{C}=\{V:\ \ell_{j_0}\le \mathcal{H}(t_{j_0};V) \le u_{j_0}\ \forall\,j_0 \in \llbracket 0,N_0-1 \rrbracket\}
\end{equation}
where, for each time index \(j_0\), $\mathcal{H}(t_{j_0};V)$ denotes, as in the previous section, the linear functional $V \mapsto \langle H^{\rho}_{j_0,\cdot},\rho_{j_0,\cdot}\big\rangle +  \big\langle H^{\omega}_{j_0,\cdot},\omega_{j_0,\cdot}\big\rangle
 + \big\langle H^{\zeta}_{j_0,\cdot},\zeta_{j_0,\cdot}\big\rangle$. Let us also introduce $a_{j_0}\coloneqq\big(H^{\rho}(t_{j_0},\cdot),\,H^{\omega}(t_{j_0},\cdot),H^{\zeta}(t_{j_0},\cdot)\big)$, the spatial weights at $t_{j_0}$. Since different times have disjoint support, \(\{a_{j_0}\}_{j_0}\) form an orthogonal family in the product space $\mathcal{E}_c$.
By~\cite[Theorem~3.3.14 ]{bauschke1996projection}, the projection of \(V\) onto $\mathcal{C}$
is given by the orthogonal-box formula, i.e.
\[
\prox_{\iota_{\mathcal{C}}}(V)= V - \sum_{j_0=0}^{N_0-1}\frac{\lambda_{j_0}(V)}{\|a_{j_0}\|^2}\,a_{j_0},
\]
\[
\lambda_{j_0}(V)=
\begin{cases}
0, & \ell_{j_0}\le \mathcal{H}(t_{j_0};V) \le u_{j_0}\\
\mathcal{H}(t_{j_0};V) - u_{j_0}, & \mathcal{H}(t_{j_0};V)>u_{j_0}\\
\mathcal{H}(t_{j_0};V) - \ell_{j_0}, & \mathcal{H}(t_{j_0};V)<\ell_{j_0},
\end{cases}
\]
where \(\|a_{j_0}\|^2=\sum_{j_1=0}^{N_1-1} \big((H^{\rho}_{j_0,j_1})^2+(H^{\omega}_{j_0,j_1})^2+(H^{\zeta}_{j_0,j_1})^2\big)\,h_1\).
\section{Numerical experiments}
\label{sec:experiments}
In this section, we demonstrate the capabilities of the proposed framework across various scenarios. These should not be regarded as real-world applications, but rather as simplified examples designed to highlight different aspects of the model. As explained in Remark~\ref{rem:feasibility} the feasibility (and thus the existence of minimizers) for all examples presented in this section is guaranteed.  
The experimental setup and computation timings are detailed in Supplementary Material~\ref{appendix:experiment_details}. Furthermore, in line with good practices of reproducibility, notebooks for all experiments are publicly available at our GitHub repository \url{https://github.com/mao1756/acuot}.
\subsection{Total mass constraints}\label{sec:experiments:mass}
As a first example we consider (equality and inequality) constraints on the total mass, which in the setting of~\eqref{eq:constrained_WFR_extended} corresponds to $H^{\rho} =\pm 1, H^{\omega} = 0$, $H^{\zeta} = 0$ and a ``mass evolution'' function $F(t)$. 
\subsubsection*{Spherical Hellinger-Kantorovich distance}\label{sec:SHK}
We start by discussing the special case of a single equality constraint with $F(t) = 1$, i.e., where $\rho_t$ is constrained to be a probability measure at all times. This situation was first studied by Laschos and Mielke in \cite{laschos2019geometric} and the derived distance is called the \emph{Spherical Hellinger-Kantorovich distance} (SHK). A deep-learning-based algorithm to compute the solution in this situation was recently proposed in \cite{jing2024machine}. Alternatively one can use the results of~\cite[Theorem 2.7]{laschos2019geometric}, who proved that the unconstrained WFR geometry forms a cone over the SHK geometry. This allows one to 
obtain the solution of the constrained problem, by projecting the solution from the \emph{unconstrained} problem onto the space of probability densities. We will use this procedure as a benchmark for our constrained implementation. In Figure \ref{fig:SHK}, we show the solution of the SHK optimization problem between two Gaussian bumps. 
We compare our solution to the projection of the solution to the unconstrained WFR problem using the Laschos-Mielke formula~\cite[Theorem 2.7]{laschos2019geometric}. The two solutions agree both qualitatively and quantitatively; the average~$L^2$ distance between these two minimizers is $\approx 3.0 \times 10^{-3}$. 
\begin{figure}[htbp]
  \centering
 \includegraphics[width=.3\linewidth]{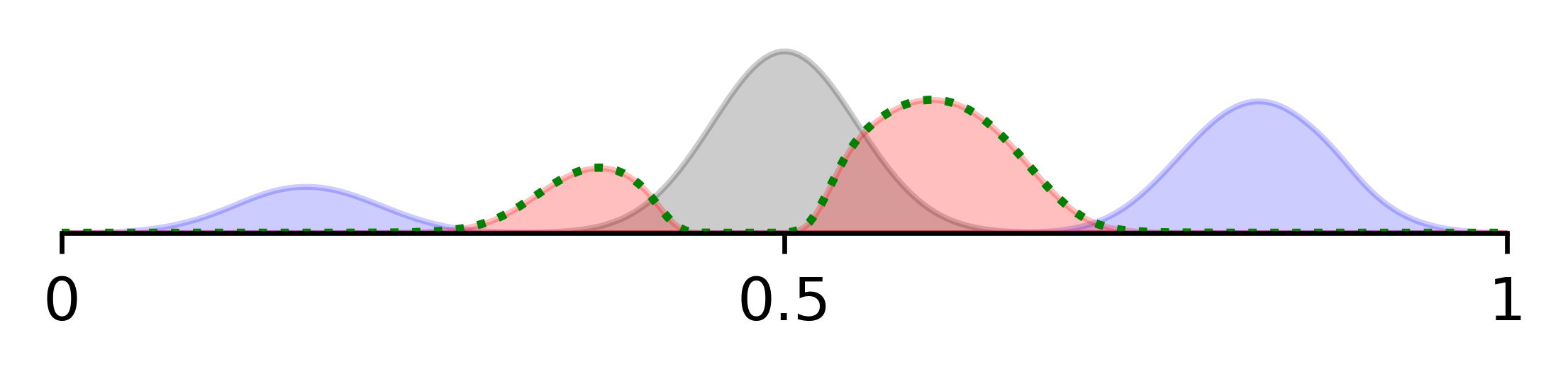}
\includegraphics[width=.3\linewidth]{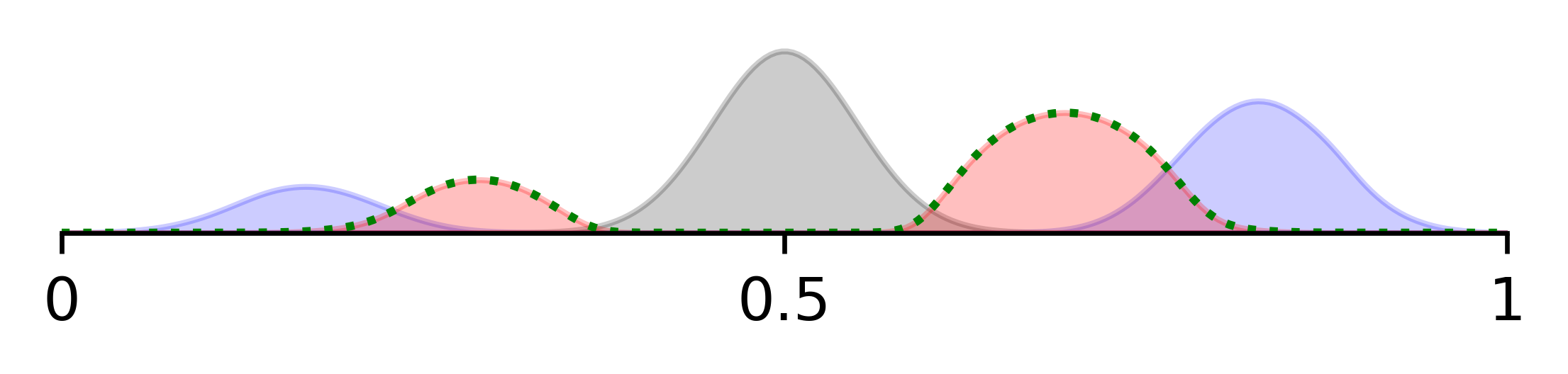}
\includegraphics[width=.3\linewidth]{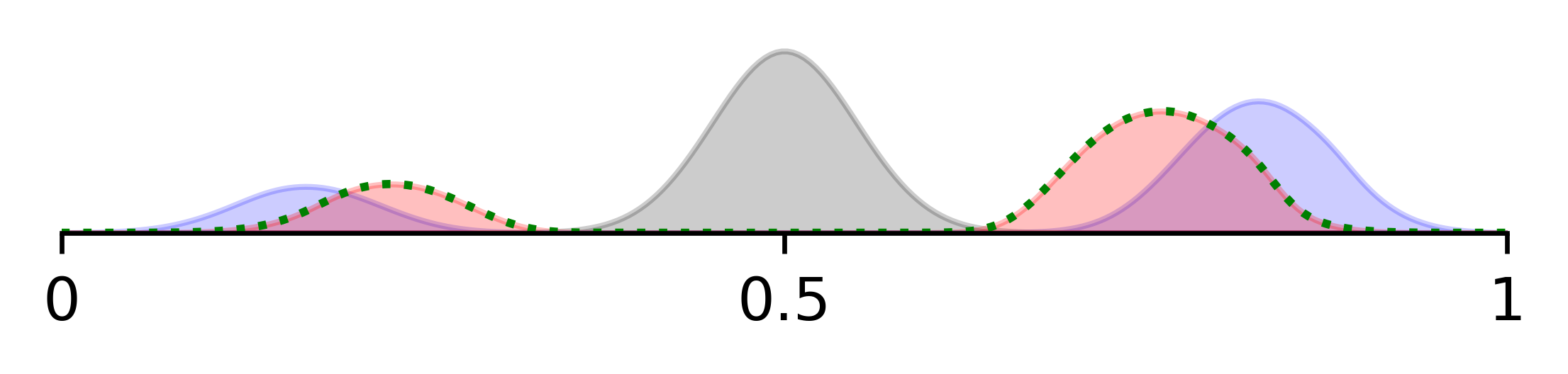}
\caption{Spherical Hellinger-Kantorovich distance: Solution of the constrained WFR problem where the solution is restricted to the space of probability measures. Gray and blue indicate $\rho_0$ and $\rho_1$, respectively; the red curves show the solution at $t=0.25$ (left), $t=0.5$ (middle), and $t=0.75$ (right). The green dotted curve represents the solution of the unconstrained WFR geodesic projected onto the constraint according to the results of \cite{laschos2019geometric}.}
\label{fig:SHK}
\end{figure}
\subsubsection*{Inequality constraints on the total mass}
Next, let us consider a mass inequality constraint of the form 
$\int_{\Omega} d\rho_t(x) \geq c$.
For the specific choices of $c=0.8$ and $c=1$, Figure \ref{fig:totalmass} shows the solutions where the initial and target densities are again chosen as (shifted) Gaussian bumps. Moreover, we display the total mass of the geodesic at each time $t$. In this example, we have set the parameter $\delta$  in the optimization objective to be small enough ($\delta = 0.5/\pi$) so that destroying and creating mass is on average cheaper than transport.  
\begin{figure}[htbp]  
    \centering
    \begin{subfigure}[t]{0.45\textwidth}
      \includegraphics[width=0.95\linewidth]{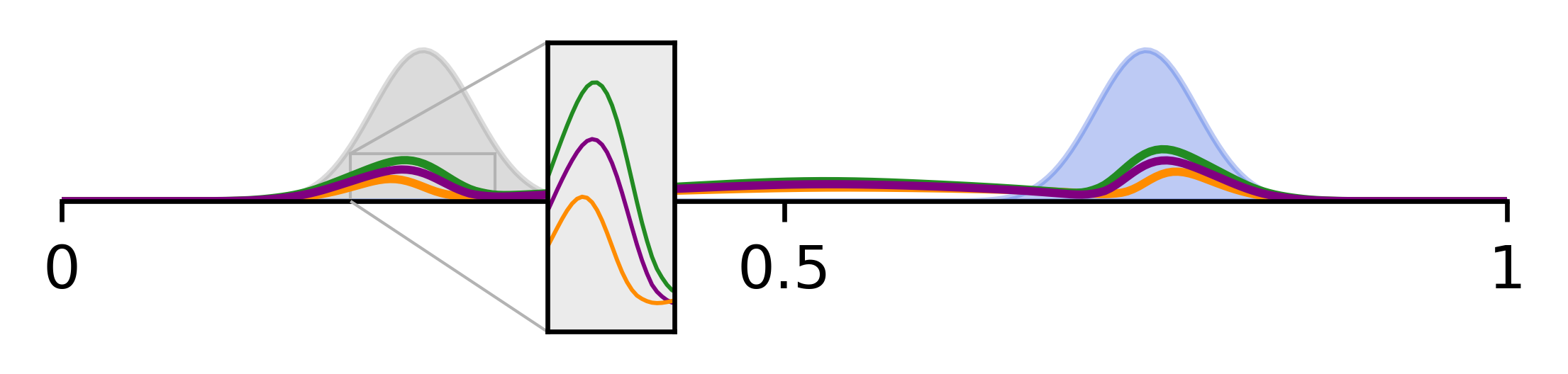}
      \includegraphics[width=0.95\linewidth]{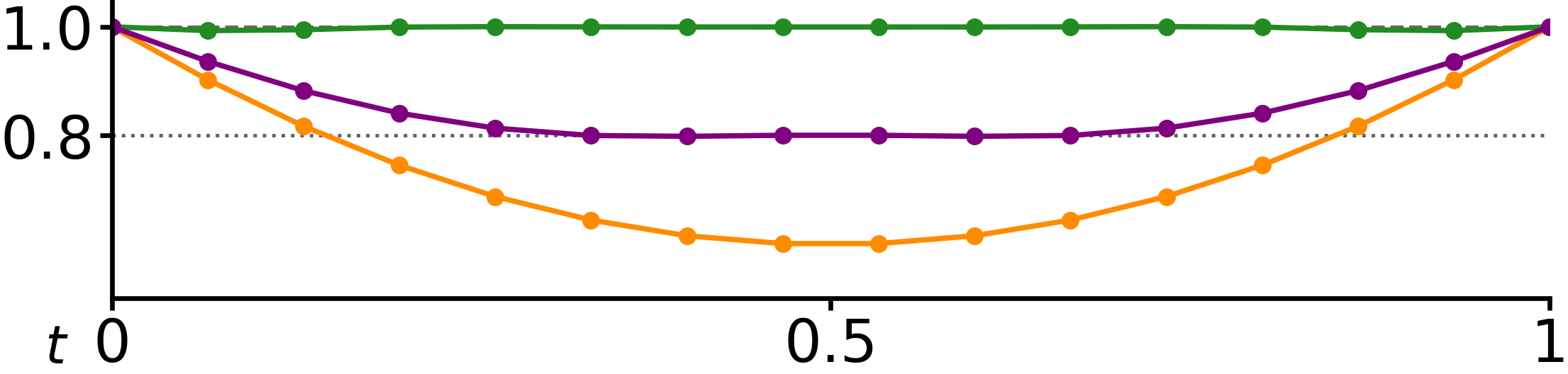}
    \end{subfigure}
    \quad\vline\quad
    \begin{subfigure}[t]{0.45\textwidth}
    \raisebox{-1cm}{
     \includegraphics[width=0.95\linewidth]{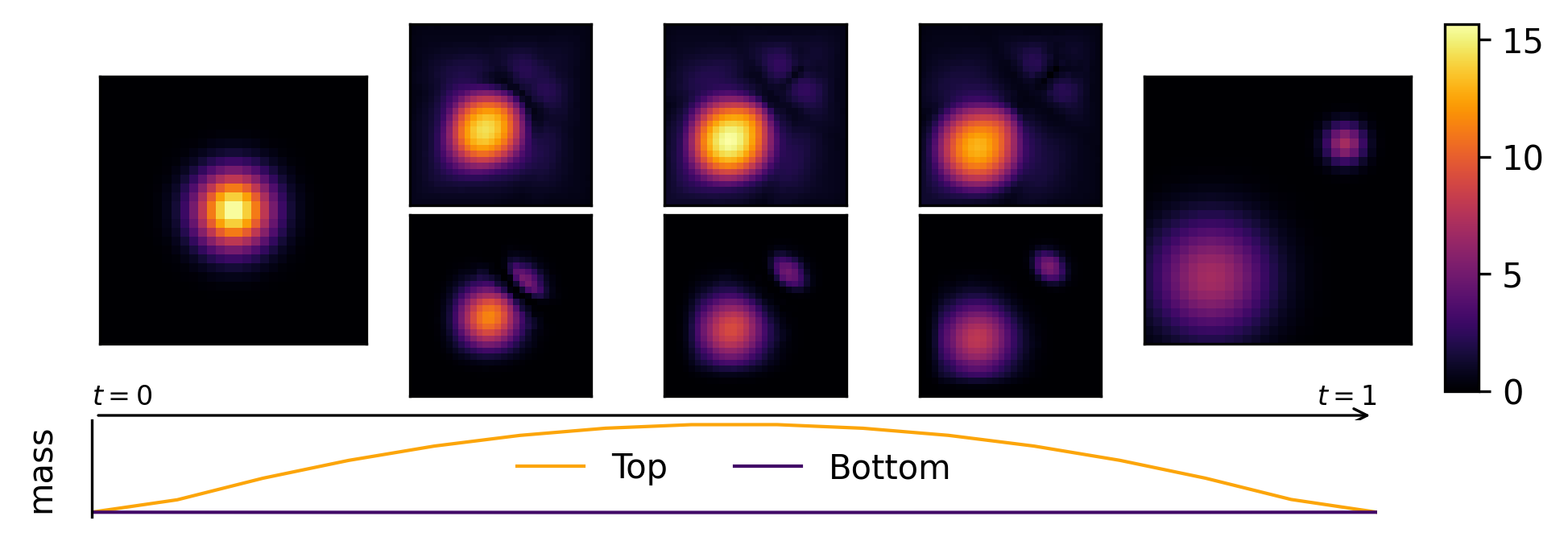}}
    \end{subfigure}
\caption{Left panel: the inequality constraint $\int_\Omega d\rho_t \geq c$, with $c=0.8$ (purple), $c=1.0$ (green), and the unconstrained case (orange). Top figure: gray and blue again correspond to $\rho_0$ and $\rho_1$, while the remaining curves display the solution at $t=0.5$. Bottom figure: Total mass as a function of time for the inequality constraint, with colors matching those in the top figure. Right panel: In 2D, the top panel shows a constrained geodesic and the bottom panel an unconstrained geodesic (evaluated at $t=0$, $0.25$, $0.5$, $0.75$, and $1.0$) along with the evolution of total mass. Here, the total mass constraint is defined by $F(t)=3-8(t-0.5)^2$ for all $t\in[0,1]$.}
\label{fig:totalmass}
\end{figure}
\subsubsection*{2D total mass constraints}
In Figure \ref{fig:totalmass}, we also present an example of the WFR problem with total mass equality constraints in 2D. The domain is $[0,1]^2$, the initial condition is a Gaussian bump at the center, while the terminal distribution is a mixture of two Gaussians. The total mass constraint function is set to $F(t) = 3 - 8 (t - 0.5)^2 $ so that a solution has to increase mass until $t=1/2$ and decrease it back after that. The unconstrained and constrained solutions are shown on the top and bottom row respectively. We observe that the total mass constraint not only influences the total mass of intermediate densities, but also their spatial evolution.
\subsection{Barrier constraints}
\label{subsec:barrier_constraint}
\begin{figure}[htbp]
    \includegraphics[width=\linewidth]{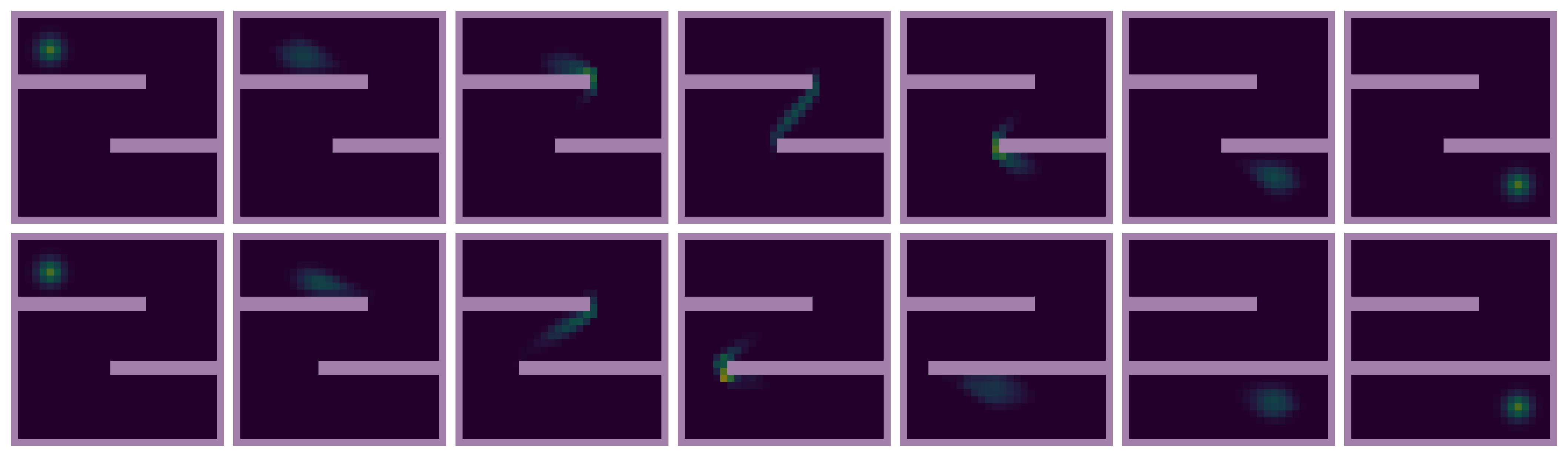}
    \caption{Barrier constraint: transporting a density through a domain with obstacles. The top row corresponds to a static barrier case, while the bottom one shows the results with a dynamic (i.e. time changing) barrier. The solution is shown at $t = 0, 0.1, 0.3, 0.5, 0.7, 0.9$ and $1.0$.}\label{fig:barrier_two_figure}
\end{figure}
In this part, we model the situation of (physical) barriers, i.e., specify (possibly time-dependent) regions of the domain in which no mass can enter or be created. This can be implemented as an affine constraint of the form $\int_{\Omega} H(t,x) \, d\rho_t(x) = 0$,
where $H(t,x)$ is a function that is positive in the prohibited region and zero elsewhere. 
In the following, we present two examples: one with a static barrier and another with a moving barrier. For balanced OT, similar constraints (in the static case) have been previously studied in \cite{papadakis2014optimal}.
Figure~\ref{fig:barrier_two_figure} displays the solution in these two different scenarios where the initial and final distributions are Gaussian bumps located in the upper left and lower right corners of the 2D domain. For both experiments, we choose a relatively high value for the parameter $\delta$ ($\delta=10$) so as to enforce transport over mass teleportation. In the static case (top row of the figure), we see that the distribution slides along the walls to the target density. For the time-dependent barrier (bottom row), the qualitative behavior is similar but with the notable difference that the motion has to accelerate in the earlier times in order to pass the barrier before it closes off the path.
\subsection{Length measures of convex curves}
\label{subsec:convex_curve_constraint}
\begin{figure}[htbp]
  \centering
  \begin{subfigure}[t]{\linewidth}
    \centering
    \includegraphics[width=\linewidth]{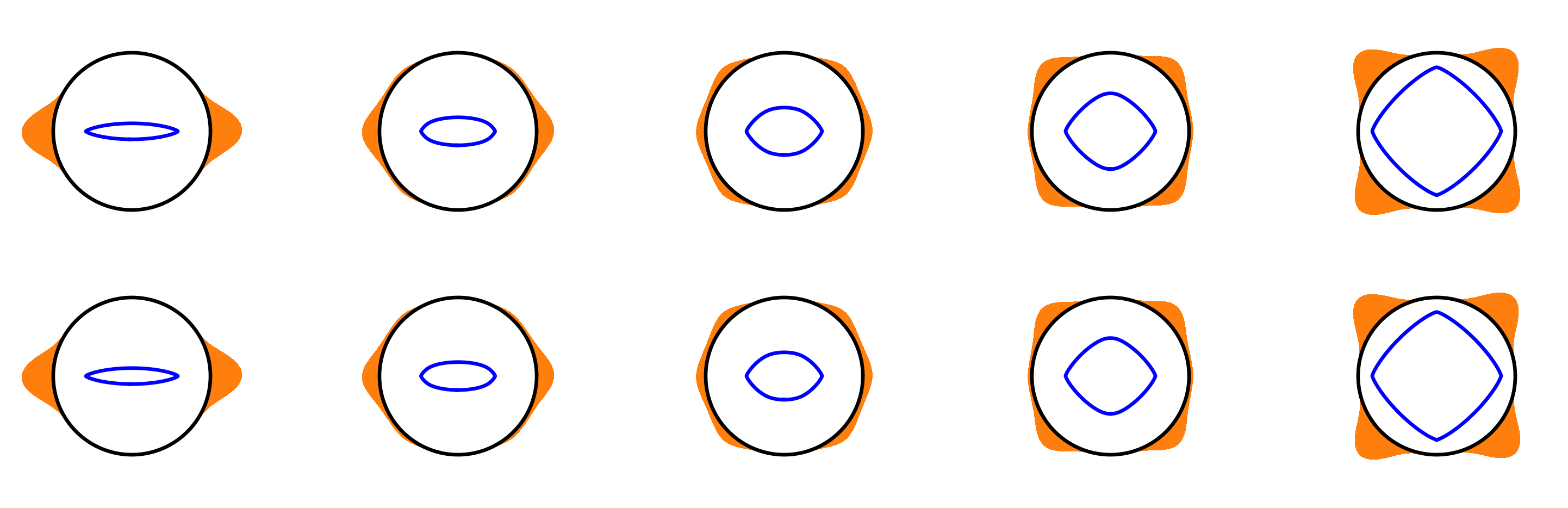}
    \caption{Symmetric case}\label{fig:symmetric_transport}
  \end{subfigure}
  \vspace{1em}
  \begin{subfigure}[t]{\linewidth}
    \centering
    \includegraphics[width=\linewidth]{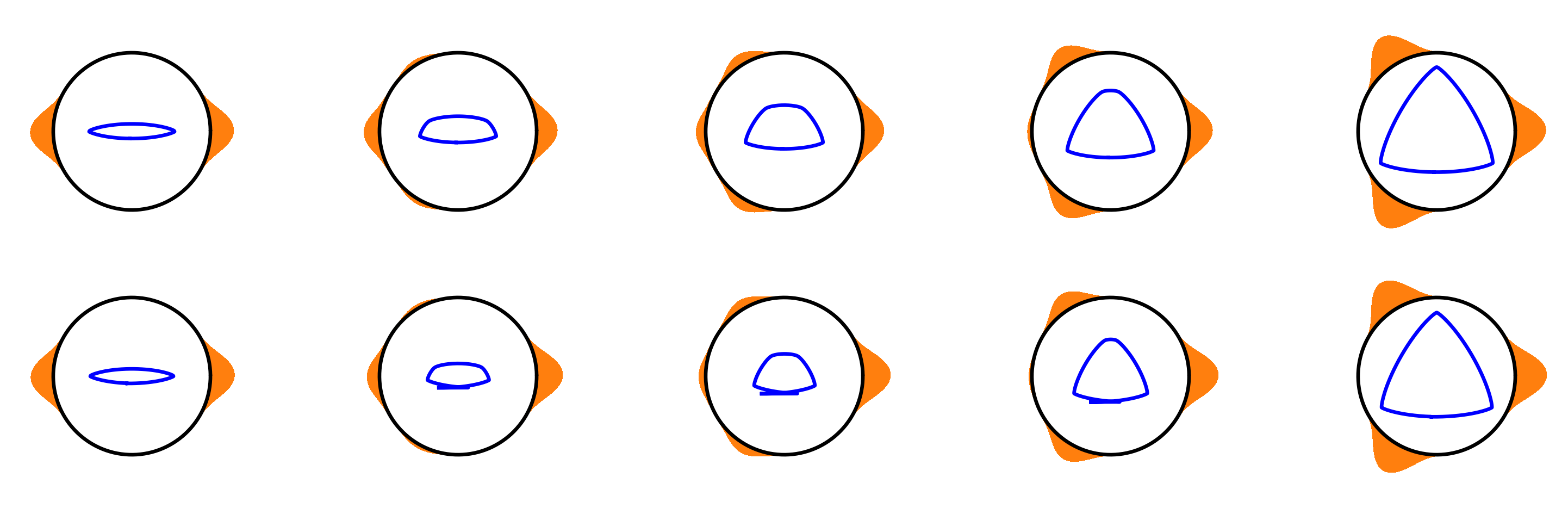}
    \caption{Nonsymmetric case}\label{fig:nonsym_transport}
  \end{subfigure}
  \caption{Effect of a convex-curve constraint on the geodesic evolution. From left to right we show 
$t=0, 0.25, 0.5, 0.75,$ and $ 1$. Each panel displays the density on $\mathbb{S}^1$
 (orange) together with the corresponding convex curve (blue). The top row enforces a convex-curve constraint; the bottom row shows the unconstrained geodesic.}
\end{figure}
The constraint studied in this section is motivated by convex geometry. The length measure $\mu$ of a convex curve in $\mathbb{R}^2$ is the measure on the unit circle $\mathbb{S}^{1}$ defined as the pushforward of its arclength measure by the Gauss map; in other words, for any measurable set $A \subset \mathbb{S}^1$, $\mu(A)$ represents the total length of the portion of the convex curve for which the unit tangent vectors lie within $A$. Due to a classical theorem of Minkowski \cite[Section 8.2.1]{Schneider2014}, $\mu$ must satisfy:
\begin{equation}
  \label{eq:convex_set_constraint}
  \int_{\mathbb{S}^{1}} xd\mu(x) = 0.
\end{equation}
Conversely, if a measure $\mu$ on $\mathbb{S}^{1}$ satisfies this constraint, then there exists a convex curve, unique up to translations, whose length measure is $\mu$~\cite[Theorem 8.2.2]{Schneider2014}. Thus, by considering the WFR energy subject to the constraint~\eqref{eq:convex_set_constraint} (i.e. one equality constraint with $H(x) = x$ and $F(t) = 0$), one obtains a corresponding metric on the space of convex curves in $\mathbb R^2$. For the space of convex, unit length curves, this was considered in~\cite{charon_length_2021} within the balanced optimal transport setting. Here, we extend this idea to unbalanced optimal transport, thus extending the metric to the space of all convex curves, not only unit length ones. 
In Figure \ref{fig:symmetric_transport} we show an example, where the
source and the target curve (measure, resp.) are both symmetric with respect to the antipodal map $R(x) = -x$, i.e., the source and target measures $\rho_0, \rho_1$ satisfy $R_* \rho_0 = \rho_0, R_* \rho_1 = \rho_1$ where $R_*$ is the pushforward map by $R$. In this case, we observe that the unconstrained algorithm leads to essentially the same solution as the constrained algorithm, with the $L^2$ distance between the constrained and unconstrained solution being $8.7\times 10^{-8}$. This is consistent with the following theoretical result (whose proof is given in Supplementary Material~\ref{sec:proof_lem_symmetric}): 
\begin{lemma}
  \label{thm:symmetric_solution}
  Let $\rho_0, \rho_1$ be symmetric with respect to the map $R(x) = -x$. Then, there exists a minimizer of the unconstrained WFR problem between $\rho_0$ and $\rho_1$ that is symmetric at almost all times $t\in [0,1]$. In particular, this minimizer satisfies the constraint \eqref{eq:convex_set_constraint}.
\end{lemma}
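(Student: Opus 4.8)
The plan is to establish the lemma by a symmetrization argument that exploits two structural features of the WFR problem: the invariance of the energy under isometries of $\mathbb{S}^1$ and its joint convexity in the triple $(\rho,\omega,\zeta)$. Write $E(\mu)=\int_{[0,1]\times\mathbb{S}^1} f_\delta\!\left(\tfrac{d\mu}{d\lambda}\right)d\lambda$ for the WFR energy of a feasible $\mu=(\rho,\omega,\zeta)\in\CE$, and let $\tilde R(t,x)=(t,-x)$ denote the antipodal map acting trivially in time. First I would invoke the existence theory (Theorem~\ref{thm:main_theorem} with no constraints, the unconstrained problem being trivially feasible between any two elements of $\Mplus(\mathbb{S}^1)$) to fix some minimizer $\mu=(\rho,\omega,\zeta)$.

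Next I would introduce the reflected triple $\mu^R := (\tilde R_*\rho,\,-\tilde R_*\omega,\,\tilde R_*\zeta)$, where the sign on the momentum encodes that the differential of the antipodal map is $-\Id$. The core of the argument is to check that $\mu^R$ is again an admissible minimizer. For the continuity equation, testing \eqref{eqn:dist_continuity_equation} for $\mu^R$ against $\phi$ reduces, after the substitution $\phi\mapsto\phi\circ\tilde R$, to the same equation for $\mu$: one uses $\partial_t(\phi\circ\tilde R)=(\partial_t\phi)\circ\tilde R$ and $\nabla(\phi\circ\tilde R)=-(\nabla\phi)\circ\tilde R$ for the interior terms---the latter identity being precisely what the sign on $-\tilde R_*\omega$ compensates---while the boundary terms return to $\rho_0,\rho_1$ by the assumed symmetry $R_*\rho_0=\rho_0$, $R_*\rho_1=\rho_1$. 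Hence $\mu^R\in\CE$. For the energy, since the antipodal map is an isometry of $\mathbb{S}^1$, passing to Radon--Nikodym densities against the reflected reference measure $\tilde R_*\lambda$ leaves $\|{-}\tilde R_*\omega\|$ equal to $\|\omega\|$ pointwise; combined with the $1$-homogeneity of $f_\delta$ (cf.\ Definition~\ref{def:wfr_distance}) this yields $E(\mu^R)=E(\mu)$, so $\mu^R$ is also a minimizer.

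With $\mu^R$ in hand, I would symmetrize by averaging, setting $\bar\mu=\tfrac12(\mu+\mu^R)$. Linearity of the distributional continuity equation and of the boundary conditions (which average to the same symmetric data) give $\bar\mu\in\CE$, and joint convexity of $E$ gives $E(\bar\mu)\le\tfrac12 E(\mu)+\tfrac12 E(\mu^R)=\min$; thus $\bar\mu$ is a minimizer. By construction $\tilde R_*\bar\rho=\bar\rho$ since $\tilde R_*\circ\tilde R_*=\Id$, so disintegrating $\bar\rho=dt\otimes\bar\rho_t$ and using uniqueness of the disintegration forces $R_*\bar\rho_t=\bar\rho_t$ for a.e.\ $t$, i.e.\ $\bar\rho_t$ is symmetric. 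Finally, any $R$-invariant measure $\nu$ on $\mathbb{S}^1$ satisfies $\int_{\mathbb{S}^1}x\,d\nu(x)=\int_{\mathbb{S}^1}R(x)\,d\nu(x)=-\int_{\mathbb{S}^1}x\,d\nu(x)$ and hence has vanishing barycenter; applied to $\bar\rho_t$ this is exactly the constraint~\eqref{eq:convex_set_constraint}.

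I expect the main obstacle to be the bookkeeping in the second paragraph: rigorously defining the pushforward of the vector-valued momentum measure and verifying in full that the reflected triple solves the weak continuity equation \eqref{eqn:dist_continuity_equation} with the same boundary data and the same energy. Once this isometry-invariance is in place, the convexity averaging and the vanishing-barycenter computation are routine.
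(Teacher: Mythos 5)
Your proposal is correct and follows essentially the same symmetrization argument as the paper: take a minimizer, push it forward by the antipodal map (an admissible minimizer by isometry invariance of $f_\delta$ and symmetry of the boundary data), average the two, and use convexity of $f_\delta$ together with linearity of the distributional continuity equation to conclude the average is again a minimizer whose density is symmetric for a.e.\ $t$, hence satisfies \eqref{eq:convex_set_constraint}. The only cosmetic difference is the momentum sign: you work in the ambient tangent-vector representation, where $dR=-\mathrm{Id}$ forces the sign in $-\tilde R_*\omega$, whereas the paper works in the angular chart, where $R$ is $\theta\mapsto\theta+\pi$ with derivative $+1$ and its pushforward $R_*\omega_t$ carries no sign---the two conventions describe the same transformed measure, and the paper additionally cites \cite{Bredies_2020} to justify the time disintegration that you invoke implicitly at the end.
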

In Figure \ref{fig:nonsym_transport}, we illustrate that for non-symmetric measures, there is, however, a significant difference between the constrained and unconstrained problem. We specifically consider the same initial measure as in our last example, but change the terminal measure to a mixture of three Gaussians on $\mathbb{S}^1$, breaking the symmetry w.r.t. the mapping $R$. We observe that the unconstrained geodesic violates the convex curve constraint, and consequently, the reconstructed interpolating curves are not closed. On the other hand, the constrained geodesic satisfies the convex curve constraint at all times, and thus we obtain a valid interpolation in the space of convex, closed curves. Despite the density plots appearing visually similar, the $L^2$ distance between the constrained and the unconstrained solution is here $1.3 \times 10^{-1}$. We leave it to future work to study more in depth the properties and potential applications of the resulting metric on convex sets, as well as the extension of our numerical framework for convex surfaces and area measures on $\mathbb{S}^2$.
\subsection{Constraints on control variables}
The constraints considered so far only applied to the density path $\rho$. However, our framework can also handle constraints on the control variables $\omega$ and $\zeta$. For instance, one may want to constrain the angle of the momentum $\omega$, or limit the amount of mass created at each time by constraining $\zeta$. In this section, we present two different examples of such constraints.
\subsubsection{Momentum flow constraint}
\label{ssec:experiments:flow}
\begin{figure}
\centering
\includegraphics[width=0.8\textwidth]{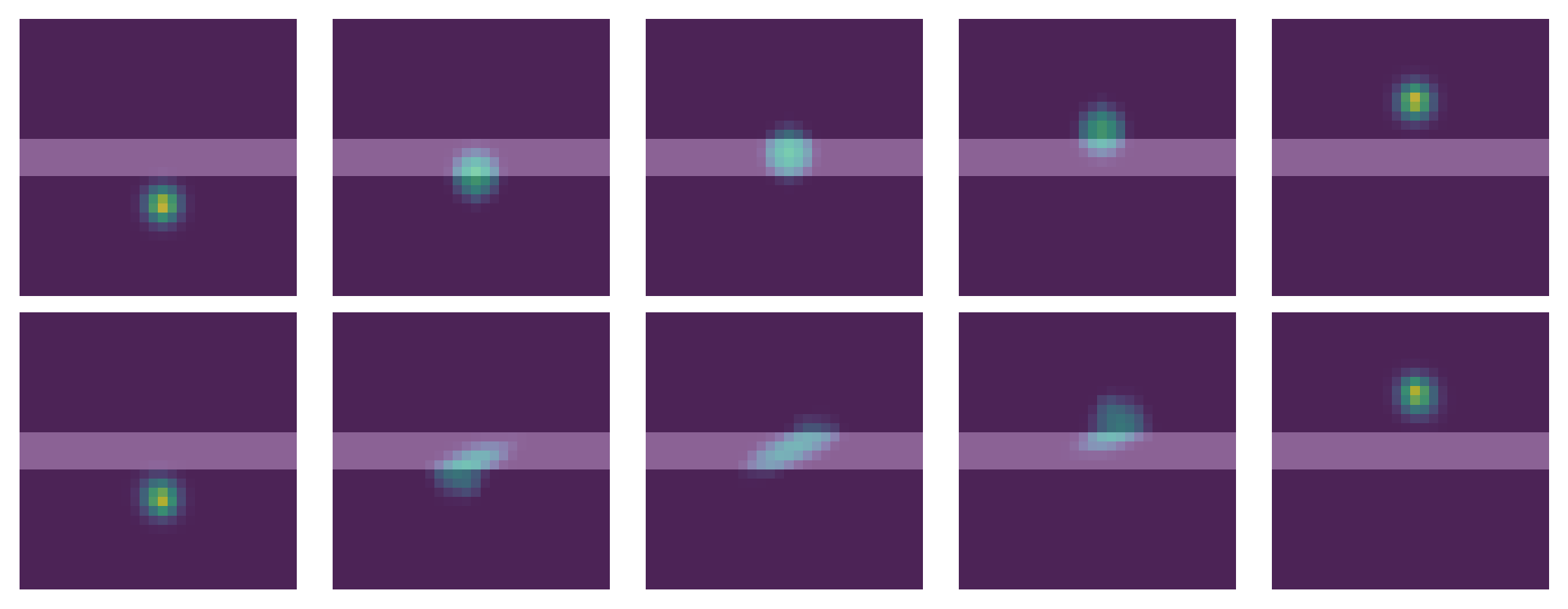}
\caption{Illustration of a flow constraint: the brighter region represents where the momentum is constrained to align with the given vector field. The bottom row shows the constrained geodesic, while the top row shows the unconstrained geodesic. Each panel displays the density at $t=0, 0.25, 0.5, 0.75,$ and $1$.}
\label{fig:river_constraint}
\end{figure}
In this example, we consider a constraint of the following form:
\begin{equation}
  \int_{\Omega} H^{\omega}(t, x) \cdot d\omega_t(x)  \geq 0
\end{equation}
which can be interpreted as enforcing the momentum $\omega$ to loosely align with the prescribed vector field $H^{\omega}(t,x)$, effectively modeling a form of extrinsic current flow such as a river. In Figure~\ref{fig:river_constraint}, we compare transport between localized 2D Gaussian distributions in the constrained and unconstrained settings. For the constrained solution, the vector field $H^{\omega}(t,x)$ points bottom-right in the purple shaded area and is zero elsewhere. As a result, $\omega(t,x)$ must stay within the halfspace of vectors positively correlated with  $H^{\omega}(t,x)$. Notably, we see that the solution adjusts the momentum direction prior to entering the region as a way to compensate for this effect.
\subsubsection{Budget constraint on mass creation/destruction}
\label{ssec:experiments:budget}
\begin{figure}[htbp]
  \centering
  \begin{subfigure}[t]{.48\textwidth}
    \centering
    \includegraphics[width=\textwidth]{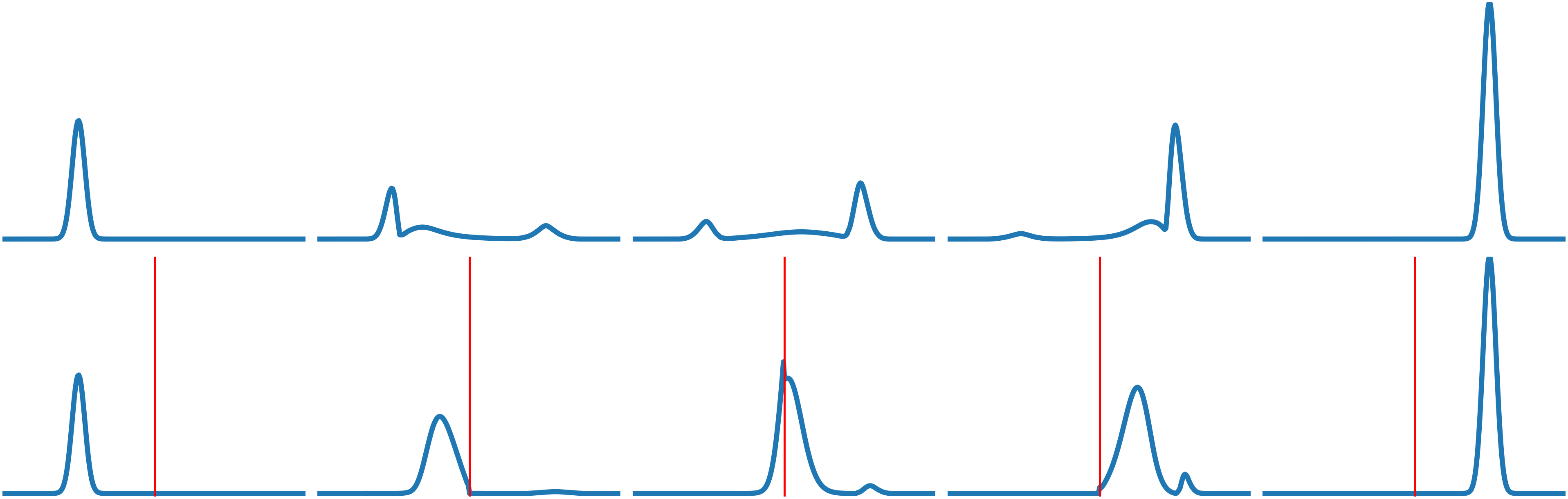}
    \caption{}
    \label{fig:budget_constraint}
  \end{subfigure}
  \begin{subfigure}[t]{.48\textwidth}
    \centering
    \includegraphics[width=\textwidth]{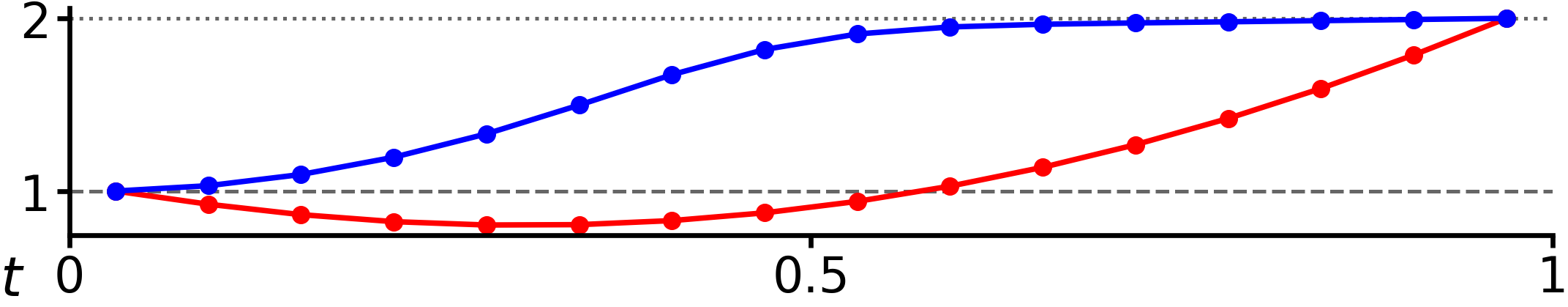}
    \caption{}\label{fig:budget-constraint:panelb}
  \end{subfigure}
  \caption{Panel \subref{fig:budget_constraint}: Budget constraint on mass creation/destruction. The portion to the right of the red line is where the mass creation/destruction is constrained to be small. Top and bottom rows show the unconstrained and constrained solutions, respectively. Each row displays the density at $t=0, 0.25, 0.5, 0.75,$ and $1$. Panel~\subref{fig:budget-constraint:panelb}: The total mass at each time point for the unconstrained (red) and constrained (blue) optimal paths.}
\end{figure}
In this example, we consider a constraint on the source term $\zeta$, of the form:
\begin{equation}
  0 \leq \int_{\Omega} H^{\zeta}(t, x) d\zeta_t(x) \leq  C
\end{equation}
for some constant $C>0$. This constraint effectively limits the amount of mass that can be created or destroyed at each time, modeling scenarios where resources are limited. In Figure~\ref{fig:budget_constraint}, we compare transport between 1D Gaussian distributions in both constrained and unconstrained settings. For the constrained case, the function $H^{\zeta}(t,x)$ is set to $1$ in the right half of the domain and $0$ elsewhere, effectively limiting mass creation/destruction in that region. In this experiment, we set $C = 0.1$. As a result, one can observe that what was mostly a mass creation in the unconstrained model becomes mostly transport in the constrained case. In particular, as evidenced by the total mass plot, the constrained optimal path gradually increases the mass until reaching the constraint region, and maintains a nearly constant mass after that. This example illustrates how constraints on mass creation/destruction can significantly alter the transport dynamics.
\subsection{Mixed constraints: population transport}\label{ssec:experiments:population}
\begin{figure}[htbp]
  \centering
  \begin{subfigure}[t]{\linewidth}
    \includegraphics[width=\linewidth]{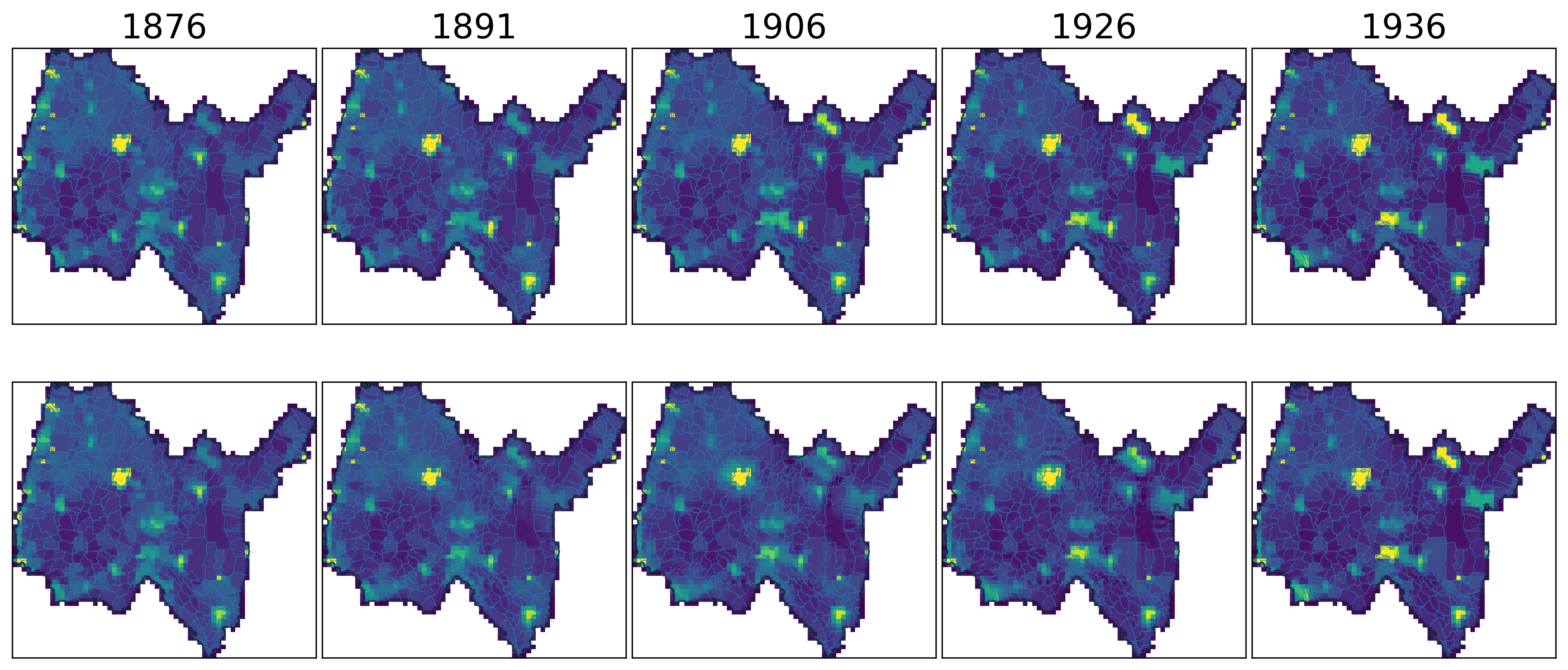}
    \caption{}
    \label{fig:ain_pop}
  \end{subfigure}
  \hspace{0.05\linewidth}
  \begin{subfigure}[t]{\linewidth}
    \includegraphics[width=\linewidth]{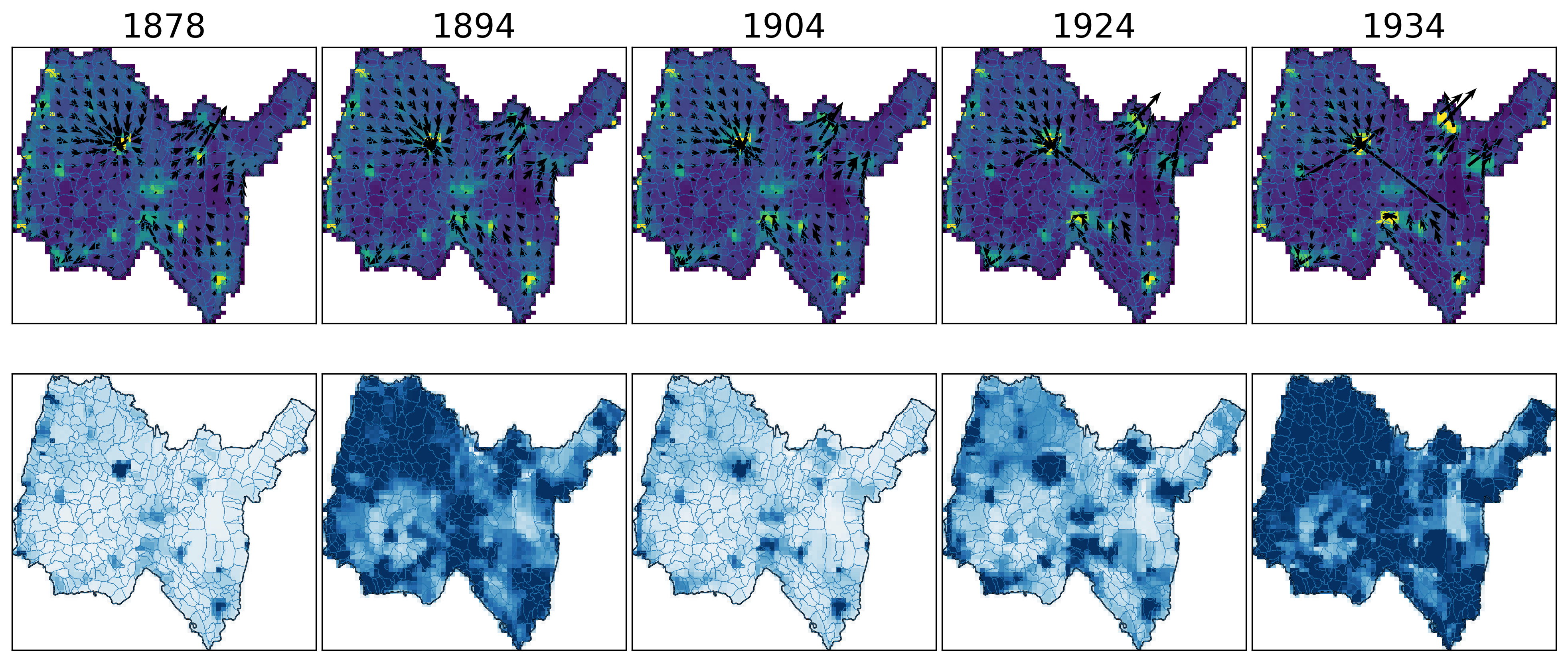}
    \caption{}
    \label{fig:ain_momentum_source}
  \end{subfigure}
  \caption{\subref{fig:ain_pop} Population distribution maps of Ain in France, at different years. The top row shows the real data evolution, while the bottom row displays the obtained interpolation between the first (1876) and final year (1936) from our algorithm. Largest discrepancy is observed around the city of Oyonnax (top right), where real data shows a rapid increase in population. \subref{fig:ain_momentum_source} Estimated momentum and density values (top) as well as the source term (bottom) across time. The momentum is represented by arrows, with length proportional to the momentum magnitude. The source values have been clipped at the 80th percentile to mitigate the effect of outliers and are represented by color, with darker blue indicating stronger population decrease. Those show population migration towards the cities of Bourg-en-Bresse (center) and Oyonnax (top right). On the other hand, the source term illustrates the important population decrease in rural areas. }
\end{figure}
Lastly, in this section, we demonstrate how to impose multiple constraints simultaneously on a real data example. Specifically, we are interested in interpolating between population density maps of the particular county of Ain in France. While we are not claiming that optimal transport is necessarily the proper model for this type of application, this example is mostly here to emphasize how our numerical framework can handle multiple constraints on non-synthetic distributions. 
Specifically, we consider the population density maps of Ain between the years 1876 and 1936 extracted from the INSEE records \cite{insee_pop_communes_1876_2020} (with censuses made at regular five-year intervals), along with the total county population over that period. Our goal is to interpolate between the distributions of 1876 and 1936 and evaluate how the path differs from the ground-truth density evolution. To that end, we impose two types of constraints: a barrier constraint to ensure that the mass remains within the region of interest (Ain) and a total mass constraint enforcing that the total population evolves according to the known records. 
Figures \ref{fig:ain_pop} and \ref{fig:ain_momentum_source} present the population distributions for both the actual data and the optimal interpolation path, as well as the momentum and the source term at various time points. While the general trend of population change is well captured (this particular county witnessed overall population decrease during that period), this approach does not entirely reflect some rapid changes in certain urban areas, such as Oyonnax (top right), between 1906 and 1936. These discrepancies may be attributed to the long time period considered here, which could obscure non-optimal local migration patterns, as well as to the restrictive barrier constraint that excludes in- or out-flows from external regions, among other things.
\subsection{Convergence analysis of the PPXA algorithm}
\label{sec:convergence_analysis}
\begin{figure}[htbp]
  \begin{center}
    \begin{subfigure}[b]{0.48\textwidth}
      \includegraphics[width=\linewidth]{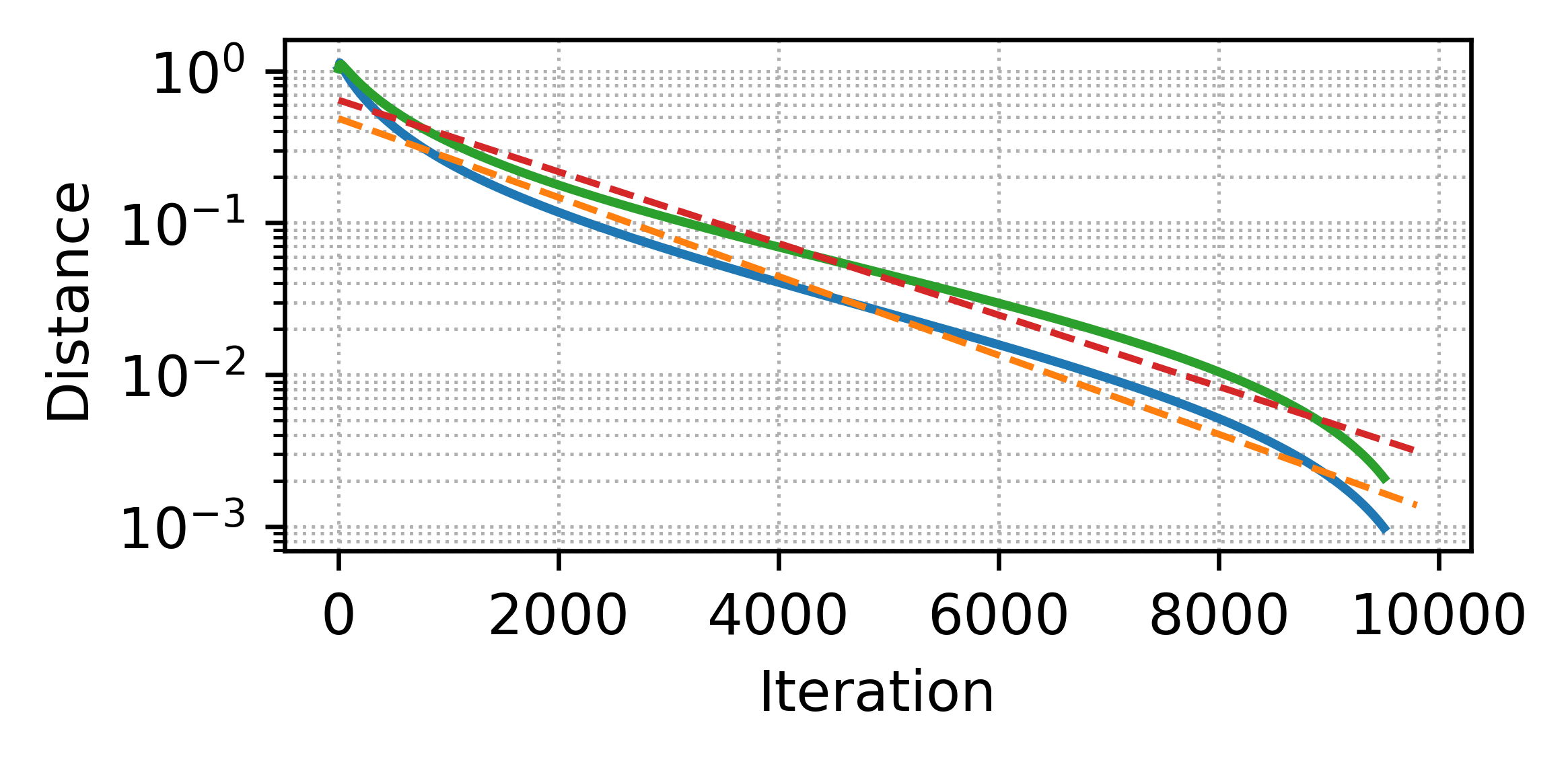}
      \caption{}
      \label{fig:convergence_analysis_left}
    \end{subfigure}
    \hfill
    \begin{subfigure}[b]{0.48\textwidth}
      \includegraphics[width=\linewidth]{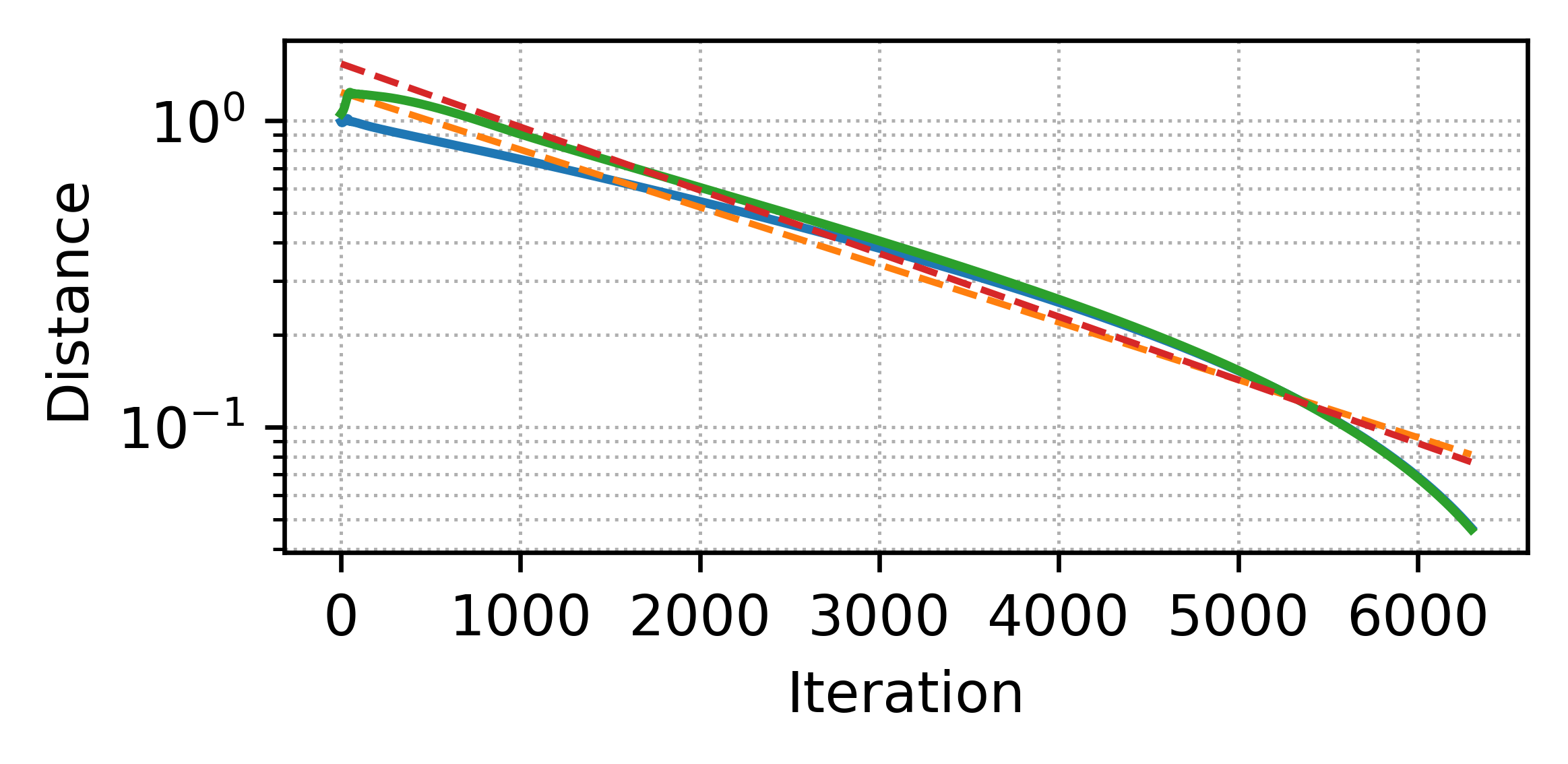}
      \caption{}
      \label{fig:convergence_analysis_right}
    \end{subfigure}
  \end{center}
  \caption{Convergence plot of the PPXA algorithm. Panel \subref{fig:convergence_analysis_left} shows the SHK experiment (Section \ref{sec:SHK}) while panel \subref{fig:convergence_analysis_right} shows the static barrier example (Section \ref{subsec:barrier_constraint}). The $y$-axis shows the relative error between the current iterate and the last iterate (see Algorithm \ref{alg:ppxa}) in the $L^2$ norm on a log scale. In both figures, the red curves are the obtained linear fits.} 
  \label{fig:convergence_analysis}
\end{figure}
In this final experiment, we seek to examine the convergence rate of the PPXA algorithm. In  Figure \ref{fig:convergence_analysis} we present the relative error between the current iterate and the last iterate of the PPXA algorithm for the SHK example (Section \ref{sec:SHK}) and the static barrier example (Section \ref{subsec:barrier_constraint}) in the log scale. We have fitted linear curves to the plots that provide good agreement with the empirical convergence rate: for the SHK example we obtained a convergence rate of $q=0.999414$ for the unconstrained case and $q=0.999472$ for the constrained case. The corresponding coefficients of determination (the fraction of variance in the data explained by the model) for the linear model were $R^2 =0.978$ for the unconstrained case and $R^2=0.974$ for the constrained case.  For the static barrier example, 
we obtained a convergence rate of $q=0.999568$ for the unconstrained case and $q=0.999525$ for the constrained case. Here the corresponding coefficients of determination were $R^2=0.966$ for the unconstrained case and $R^2=0.979$ for the constrained case. 
We note that in all these cases the estimated $q$ is very close to one, indicating very slow convergence, e.g. for the SHK example, we need $-1/\log_{10}q \sim$ 3800-4000 iterations to reduce the error by 1 decimal place.   Overall, this convergence analysis indicates that while the
PPXA algorithm for unbalanced optimal transport problems converges formally geometrically, the rate constant is almost one and thus the practical convergence is rather sublinear. Furthermore, this convergence rate is not significantly affected by the presence of constraints.
\section{Conclusions, Future Work and Limitations}\label{sec:conclusion}
In this article we introduced a variant of the dynamic unbalanced optimal transport model which allows one to enforce a set of time-dependent integral equality and inequality constraints along the density, vector field and/or source term paths. This formulation has the important advantage of yielding convex variational problems, with provable conditions of well-posedness in both the continuous and discrete settings. These conditions are still tied to the issue of feasibility (i.e., the existence of finite energy paths satisfying all constraints), for which a more general and systematic study, in the context of our model, remains a subject of possible future work.
Based on a discretization over staggered time-space grids, we then proposed an adaptation of the parallel proximal point scheme to tackle the corresponding class of constrained non-smooth optimization problems. We showed empirically, on various examples and constraints, that despite the dimensionality of those problems, the approach is able to estimate an approximate solution in a robust and consistent way. These examples are also meant to illustrate the variety of dynamic constraints that are subsumed under our model, and their potential relevance in different applications. 
Yet we should also mention a few avenues for further improvement and extension of the framework presented in this paper. A first possible limitation is the specific constraint type that is considered here. It does not include, for instance, integral convex inequality constraints (beyond affine ones) such as bounds on the entropy or variance of the density $\rho_t$, or certain forms of congestion constraints in crowd navigation applications \cite{ kerrache2013optimal,ruthotto2020machine}. Although these would technically still lead to convex formulations, the generalization of Theorem \ref{thm:main_theorem} or \ref{thm:existence_discrete_wfr} to this new setting is, to our knowledge, not straightforward and thus could be an interesting subject of future investigation. On the numerical side, we expect the PPXA algorithm of Section \ref{ssec:ppxa_algo} to be still relevant, provided that expressions of the proximal operators for the new constraint functions are available. Another path worth exploring is the extension of the constrained transport model of this paper to the stochastic setting, in particular that of Schr\"{o}dinger bridge matching and its recent unbalanced version known as generalized Schr\"{o}dinger bridge matching \cite{liu2024generalized,chen2025optimal}, which share many connections to diffusive flows and generative modeling in machine learning.  
\section*{Acknowledgements}
M. Nishino was partially supported by NSF grant CISE--2426549. M. Bauer and T. Needham were partially supported by NSF grants DMS--2324962 and CIF--2526630. N. Charon was partially supported by NSF grants DMS--2438562 and CIF--2526631. 
\newcommand{\etalchar}[1]{$^{#1}$}

\renewcommand{\appendixname}{Supplementary Material}
\appendix\label{supplementary_material}
\thispagestyle{empty}
\section{Experimental Details}
\label{appendix:experiment_details}
\subsection{Environment}
\begin{description}
  \item[OS:] Windows 11 Pro (Build 26100).
  \item[Software stack:] Python~\texttt{〈3.12.7〉}; \texttt{numpy}~\texttt{〈2.1.3〉}; \texttt{scipy}~\texttt{〈1.14.1〉}; \texttt{POT}~\texttt{〈0.9.5〉}.
\end{description}
\subsection{Hardware}
\begin{description}
  \item[System:] Dell G15 5530 laptop (x64).
  \item[CPU:] Intel Core i7-13650HX (20 CPUs), $\sim$ 2.6GHz.
  \item[Memory:] 16\,GB RAM.
\end{description}
\subsection{Hyperparameters}
The global hyperparameters used in all experiments are as follows:
\begin{itemize}
  \item Relaxation parameter: $\alpha = 1.8$.
  \item Proximal parameter: $\gamma = \max(\max(\rho_0) ,\max(\rho_1))/2$ where $\rho_0$ and $\rho_1$ are the initial and terminal densities.
\end{itemize}
These choices are inherited from the code for the unconstrained case by \cite{chizat2018interpolating} and have been found to work well in practice.
For each experiment, we also have specific hyperparameters, which are summarized in Table~\ref{tab:experiment_hyperparameters}.
\begin{table}[!htbp]
\centering
\small            
\setlength{\tabcolsep}{1pt}
\begin{tabular}{lcccc}
\hline
\textbf{Experiment} & $\mathbf{T}$ & \textbf{Space grid size} & \#iter & $\delta$ \\
\hline
\texttt{SHK}            & 15 & $(256,)$     & 10000 & 1.0 \\
\texttt{Inequality total mass} & 15 & $(256,)$     & 3000  & $\tfrac{1}{2\pi}$ \\
\texttt{2D total mass}        & 15 & $(30,\,30)$ & 3000  & $1.0$ \\
\texttt{Static barrier}        & 30 & $(30,\,30)$ & 7000  & $10.0$ \\
\texttt{Moving barrier}        & 30 & $(30,\,30)$ & 7000  & $10.0$ \\
\texttt{French population}                    & 12 & $(64,\,64)$ & 10000 & $1.0$ \\
\texttt{Symmetric curve}       & 15 & $(256,)$     & 10000 & $0.01$ \\
\texttt{Nonsymmetric curve}     & 15 & $(256,)$     & 10000 & $0.01$ \\
\texttt{River}                   & 15 & $(30,\,30)$ & 3000  & $2.0$ \\
\texttt{Budget}                 & 15 & $(256,)$     & 3000  & $\tfrac{1}{2\pi}$ \\
\hline
\end{tabular}
\caption{Hyperparameters for each experiment. Here, $T$ is the number of time steps, \#iter is the number of iterations, and $\delta$ is the interpolation parameter between transport and Fisher-Rao terms in the WFR energy.}
\label{tab:experiment_hyperparameters}
\end{table}
\subsection{Wall-clock times}
The wall-clock times for each experiment are summarized in Table~\ref{tab:experiment_times}. All experiments were run on a single CPU core. The time is only for the optimization algorithm, excluding the time for data loading and visualization.
\begin{table}[!htbp]
\centering
\capstartfalse
\small            
\setlength{\tabcolsep}{1pt}
\begin{tabular}{lc}
\hline
\textbf{Experiment} & Mean time (sec) $\pm$ std. dev. \\
\hline
\texttt{SHK}            & 22.025 $\pm$ 0.424 \\
\texttt{Inequality total mass} & 7.400 $\pm$ 0.320 \\
\texttt{2D total mass}        & 23.359 $\pm$ 0.621 \\
\texttt{Static barrier}        & 115.066 $\pm$ 3.094 \\
\texttt{Moving barrier}        & 114.071 $\pm$ 2.726 \\
\texttt{French population}                    & 425.274 $\pm$ 24.554 \\
\texttt{Symmetric curve}       & 24.916 $\pm$ 0.184 \\
\texttt{Nonsymmetric curve}     & 24.577 $\pm$ 0.067 \\
\texttt{River}                   & 26.738 $\pm$ 0.426 \\
\texttt{Budget}                 & 6.762 $\pm$ 0.016 \\
\hline
\end{tabular}
\caption{Wall-clock times for each experiment. The mean is computed over 3 runs.}
\label{tab:experiment_times}
\end{table}
\section{Pseudocode of the PPXA Implementation of Constrained UOT}\label{appendix:algo}
\begin{algorithm}[H]\caption{PPXA for constrained WFR geodesics with $M$ box constraints}
\label{alg:ppxa}
\begin{algorithmic}[1]
\State \textbf{Input:} \(\rho_0,\rho_1\), grid sizes \(N_0,\ldots,N_d\), lengths \(L_1,\ldots,L_d\).
\State \textbf{Constraints:} For each \(m=1,\dots,M\), weights \(H^{(m)}_\bullet\) and bounds \((\ell^{(m)},u^{(m)})\).
\State Initialize \((U,V)\) (linear or Fisher–Rao path for \(\rho\)); set \(y_1=\cdots=y_{2+M}=(U,V)\).
\State Set \(\pi_k=(U,V)\) for \(k=1,\dots,2+M\).
\For{\(n=0,1,\dots,n_{\max}-1\)}
  \State \(\pi_1\gets\textrm{prox}_{\gamma(J+\iota_{\mathcal{CE}})}(y_1)\) \Comment{pointwise WFR prox on \(V\) + CE projection on \(U\)}
  \State \(\pi_2\gets\textrm{prox}_{\iota_{\{V=I(U)\}}}(y_2)\) \Comment{projection onto \(V=I(U)\)}
  \For{\(m=1\) \textbf{to} \(M\)}
    \State \(\pi_{m+2}\gets\textrm{prox}_{\iota_{\mathcal{C}^{(m)}}}(y_{m+2})\) \Comment{affine box projection (constraint \(m\))}
  \EndFor
  \State \(\bar\pi\gets\big(\pi_1+\pi_2+\sum_{m=1}^M \pi_{m+2}\big)/(2+M)\)
  \State \(y_k\gets y_k+\alpha(2\bar\pi - (U,V)-\pi_k)\) for \(k=1,\dots,2+M\)
  \State \((U,V)\gets (U,V)+\alpha(\bar\pi-(U,V))\)
\EndFor
\State \textbf{Return} \((U,V)\) with either $\bar{\rho}$ or $\rho$ as the geodesic where $U=(\bar\rho,\bar\omega,\bar\zeta)$ or \(V=(\rho,\omega,\zeta)\).
\end{algorithmic}
\end{algorithm}
\section{Mathematical Proofs}\label{sec:proofs}
\subsection{Proof of Theorem \ref{thm:main_theorem}}\label{sec:proof_existence_cont}
\begin{proof}[Proof of Theorem \ref{thm:main_theorem}]
    Toward applying the Fenchel-Rockafellar theorem, we start by introducing the following Banach spaces $X$ and $Y$:
    \begin{align}
    X&= C^1([0,1]\times \Omega)\times C([0,1])^d,\\
    Y &= C([0,1]\times \Omega)\times C([0,1]\times \Omega)^n\times C([0,1]\times \Omega).
    \end{align}
We define the convex function ${\mathcal F}: Y\to \mathbb{R}\cup \{+\infty\}$ by
    \[
    {\mathcal F}(\alpha,\beta,\gamma) = \int_{[0,1]\times \Omega}\iota_{B_\delta}(\alpha(t,x),\beta(t,x),\gamma(t,x))dtdx,
    \]
    where $\iota_{B_\delta}:\R \times \R^n \times \R \to \R \cup \{+\infty\}$ is the convex indicator function of the set $B_\delta$, defined by
    \[
        B_{\delta} = \left\{(a,b,c)\in \mathbb{R}\times \mathbb{R}^n\times \mathbb{R} \middle| a+\frac{1}{2}\left(\|b\|^2+\frac{c^2}{\delta^2}\right) \leq 0\right\};
    \]
    that is, $\iota_{B_\delta}(a,b,c) = 0$ if $(a,b,c) \in B_\delta$, and is otherwise equal to $+\infty$. 
    The Fenchel conjugate of $\iota_{B_\delta}$ is the Wasserstein-Fisher-Rao infinitesimal cost $f_\delta$.
    Next, we define a  function $\mathcal{G}:X \rightarrow \R\cup \{+\infty\}$ by
    \begin{align}
        &\mathcal{G}(\phi,\psi) \\
        &\qquad =  -\sum_{i=1}^{d'}\int_{0}^{1}\psi_i(t)(\ef_i(t))dt  + \iota_{s\preceq 0} \left(\sup_{t \in [0,1]} \psi'(t) \right)    \\
        &\qquad \qquad \qquad \qquad \qquad -\sum_{i=d'+1}^{d}\int_{0}^{1}\psi_i(t)\ef_i(t)dt +\int_{\Omega}\phi(0,\cdot)d\rho_0 - \int_{\Omega}\phi(1,\cdot)d\rho_1.
    \end{align}
    where: 
    \begin{itemize}
        \item $\psi' = (\psi_1,\ldots,\psi_{d'}) \in C([0,1])^{d'}$,
        \item $\iota_{s \preceq 0}$ is the convex indicator of the nonpositive orthant of $\R^{d'}$, i.e., 
        \[
        \iota_{s  \preceq 0}(x_1,\ldots,x_{d}) = \left\{\begin{array}{rl}
        0 & \mbox{if } x_i \leq 0 \; \forall \; i = 1,\ldots,d \\
        +\infty &\mbox{otherwise;}
        \end{array}\right.
        \]
        \item  $\sup_{t \in [0,1]} \psi'(t) \in \R^{d'}$  denotes the coordinate-wise supremum over the interval $[0,1]$.
    \end{itemize} 
    We point out that $\mathcal{G}$ is a convex function over $X$. 
    As the last step in setting up the Fenchel-Rockafellar theorem, let $A:X \rightarrow Y$ be the bounded linear operator defined by 
    \begin{equation}
        A(\phi,\psi) = \left(\partial_{t}\phi+\sum_{i=1}^{d}\eich_i^\rho \psi_i, \nabla\phi + \sum_{i=1}^{d}\eich_i^\omega \psi_i, \phi + \sum_{i=1}^{d}\eich_i^\zeta \psi_i\right).
    \end{equation}
    We then consider the primal problem
\begin{equation}\label{eqn:primal_problem}
    \inf_{x \in X} \mathcal{F}(Ax) + \mathcal{G}(x).
    \end{equation}
    Using a similar argument as in the proof of~\cite[Theorem~4.3]{bauer2025path}, one easily checks that there exists $x_0 \in X$ that satisfies the conditions of Fenchel-Rockafellar strong duality theorem---namely, $\mathcal{G}(x_0), \mathcal{F}(Ax_0) < +\infty$ and $\mathcal{F}$ is continuous at $Ax_0$---so that \eqref{eqn:primal_problem} is equivalent to the Lagrange dual problem:
    \begin{equation}\label{eqn:lagrange_dual}
        \sup_{y^* \in Y^*} \, -\mathcal{F}^*(y^*) - \mathcal{G}^*(-A^*y^*).
    \end{equation}
    Using~\cite[Theorem 7.17]{folland2013real}, we can identify the dual $Y^\ast$ as  
    \[
    Y^\ast \approx \Meas([0,1] \times \Omega)\times \Meas([0,1] \times\Omega)^n\times \Meas([0,1] \times\Omega),
    \]
    Following the argument of~\cite{chizat2018interpolating},  $\mathcal{F}^*(y^*)$ is  equal to the objective function of the constrained Wasserstein-Fisher-Rao problem  \eqref{eq:constrained_WFR_extended}. We will show that the second term of \eqref{eqn:lagrange_dual} enforces the constraints of \eqref{eq:constrained_WFR_extended}, which then proves the theorem, as the Fenchel-Rockafellar theorem guarantees the existence of a maximizer in \eqref{eqn:lagrange_dual}, hence of a minimizer for \eqref{eq:constrained_WFR_extended}.
    It remains to understand the second term of \eqref{eqn:lagrange_dual}. We have
    \begin{equation}\label{eqn:G_star_expression}
       \mathcal{G}^*(-A^*y^*) = \sup_{x=(\phi,\psi)\in X} \, \{\langle -A^*y^*,x\rangle-\mathcal{G}(x)\} = \sup_{x\in X}\left\{-\langle y^*,Ax\rangle-\mathcal{G}(x)\right\}.
    \end{equation}
    Moreover, we have by definition that
    \begin{align}
       \langle y^*,Ax\rangle = &\int_{[0,1]\times \Omega} \partial_{t}\phi d\rho + \int_{[0,1]\times \Omega}\nabla\phi \cdot d\omega +\int_{[0,1]\times \Omega}\phi d\zeta \\ 
       &\qquad  +\sum_{i=1}^{d}\int_{[0,1]\times \Omega} \psi_i\eich_i^\rho d\rho  + \sum_{i=1}^{d}\int_{[0,1]\times \Omega} \psi_i\eich_i^\omega\cdot d\omega +\sum_{i=1}^{d}\int_{[0,1]\times \Omega} \psi_i\eich_i^\zeta d\zeta,
    \end{align}
    from which we deduce that $\mathcal{G}^*(-A^*y^*)$ can be expressed as the sum of three terms:
    \begin{equation}\label{eqn:G_term_1}
        \begin{split}
            &\sup_{\phi} \left\{-\int_{[0,1]\times \Omega} \partial_{t}\phi d\rho - \int_{[0,1]\times \Omega}\nabla\phi \cdot d\omega \right. \\
            &\hspace{1.5in}  \left.-\int_{[0,1]\times \Omega}\phi d\zeta +\int_{\Omega}\phi(1,\cdot)d\rho_1 -\int_{\Omega}\phi(0,\cdot)d\rho_0 \right\},
        \end{split}
    \end{equation}
    \begin{equation}\label{eqn:G_term_2}
        \sum_{i=1}^{d'} \sup_{\psi_i \leq 0}  \int_0^1 \psi_i(t)\Bigg(F_i(t)  - \int_{\Omega} \eich_i^\rho d\rho_t - \int_{\Omega} \eich_i^\omega\cdot d\omega_t - \int_{\Omega} \eich_i^\zeta d\zeta_t\Bigg)dt,
    \end{equation}
    and
    \begin{equation}\label{eqn:G_term_3}
        \sum_{i=d'+1}^{d} \sup_{\psi_i} \int_0^1 \psi_i(t)\left(F_i(t) - \int_{\Omega} \eich_i^\rho d\rho_t - \int_{\Omega} \eich_i^\omega\cdot d\omega_t - \int_{\Omega} \eich_i^\zeta d\zeta_t\right)dt. 
    \end{equation}
    We now describe the properties of each term. 
    The first sup over $\phi$ \eqref{eqn:G_term_1} is zero when $\mu=(\rho,\omega,\zeta)$ satisfies the continuity equation and $+\infty$ otherwise. This term therefore enforces the continuity equation in the supremum \eqref{eqn:lagrange_dual}.
    To get the second term \eqref{eqn:G_term_2}, we have used the presence of the indicator function $\iota_{s\preceq 0} \left(\sup_{t \in [0,1]} \psi'(t) \right)$ in $\mathcal{G}$ to deduce that, when taking the supremum in \eqref{eqn:G_star_expression}, one can restrict to functions which satisfy $\psi_i(t) \leq 0$ for a.e.~$t \in [0,1]$ and for all $i =1,\ldots,d'$. Now suppose that the inequality constraint 
    \[
    \int_{\Omega} \eich_i^\rho d\rho_t + \int_{\Omega} \eich_i^\omega\cdot d\omega_t + \int_{\Omega} \eich_i^\zeta d\zeta_t \leq F_i(t) \quad \mbox{(for a.e.~$t \in [0,1]$)}
    \]
    is satisfied. Then the term in parentheses in \eqref{eqn:G_term_2} is non-negative, hence the supremum over $\psi_i$'s satisfying $\psi_i \leq 0$ is zero. On the other hand, if the inequality constraint fails, then the term in parentheses is negative on a set of positive measure, hence  $\psi_i$'s may be chosen to make \eqref{eqn:G_term_2} arbitrarily large. Putting this together, we have that the inequality constraint is enforced by the term \eqref{eqn:G_term_2}. 
    Lastly, it is straightforward to show that the term \eqref{eqn:G_term_3} is equal to zero when the equality constraints $\mu \in \AHFeq(H_i,F_i)$ hold for $i=d'+1,\ldots,d$ and $+\infty$ otherwise. 
    Putting all of our observations together, we deduce that 
    \[
    \mathcal{G}^*(-A^*y^*) = \left\{\begin{array}{rl}
0 & \mbox{if } (\rho, \omega, \zeta)\in \CEHF \\
+\infty & \mbox{otherwise.}
\end{array}\right.
    \]
    This finally allows us to recover that the dual problem is equivalent to \eqref{eq:constrained_WFR_extended} and that, by Fenchel-Rockafellar strong duality, the two problems have the same optimal value and that there exists a dual optimal that is therefore a solution of the constrained unbalanced optimal transport problem.
\end{proof}
\subsection{Proof of Theorem \ref{thm:existence_discrete_wfr}}
\label{sec:existence_discrete_wfr}
Our proof will rely on the following intermediate result involving the notion of the horizon cone \cite[Definition 3.3]{RockafellarWets1998}:
\begin{proposition}
  \label{prop:horizon_cone_affine}
  Let $V, W$ be finite dimensional normed vector spaces, $L: V\to W$ be a linear map and $w\in W$. Let $C = \{v\in V: Lv = w\}$ be an affine subspace of $V$. Then, the horizon cone of $C$ is the kernel of $L$ i.e. $C^{\infty} = \ker(L)$.
\end{proposition}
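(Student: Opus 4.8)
The plan is to prove the two inclusions $\ker(L) \subseteq C^\infty$ and $C^\infty \subseteq \ker(L)$ directly from the sequential definition of the horizon cone \cite[Definition~3.3]{RockafellarWets1998}: namely, $d \in C^\infty$ if and only if there exist points $v_k \in C$ and scalars $\lambda_k \searrow 0$ with $\lambda_k v_k \to d$. Throughout I would use that $C$ is nonempty (being an affine subspace) and that $L$, as a linear map between finite-dimensional normed spaces, is continuous.

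For the inclusion $\ker(L) \subseteq C^\infty$, I would fix a base point $v_0 \in C$ and take $d \in \ker(L)$. The sequence $v_k \coloneqq v_0 + k\,d$ lies in $C$, since $L v_k = L v_0 + k\,L d = w + 0 = w$; choosing $\lambda_k \coloneqq 1/k \searrow 0$ then gives $\lambda_k v_k = v_0/k + d \to d$, so that $d \in C^\infty$. For the reverse inclusion, I would take $d \in C^\infty$ with witnessing sequences $v_k \in C$ and $\lambda_k \searrow 0$ satisfying $\lambda_k v_k \to d$. Applying $L$ and using linearity together with $L v_k = w$ yields $L(\lambda_k v_k) = \lambda_k w \to 0$, while continuity of $L$ forces $L(\lambda_k v_k) \to L(d)$. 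Comparing limits gives $L(d) = 0$, i.e.\ $d \in \ker(L)$.

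I do not anticipate a genuine obstacle: the result is essentially immediate from the linearity and continuity of $L$. The only point requiring care is the standing assumption that $C$ is nonempty (equivalently $w \in \operatorname{range}(L)$), which is built into the hypothesis that $C$ is an affine subspace and is exactly what produces the base point $v_0$ used in the first inclusion. As an alternative route, one could instead invoke the fact that for a closed convex set the horizon cone coincides with the recession cone \cite{RockafellarWets1998}: since $C$ is closed and convex, $C^\infty = \{d : v + t d \in C \text{ for all } v \in C,\ t \ge 0\}$, and the requirement $L(v + t d) = w$ for all $t \ge 0$ reduces, via $Lv = w$, to $L d = 0$, giving the same conclusion.
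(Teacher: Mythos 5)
Your proof is correct, and your primary argument takes a slightly different route from the paper's. The paper first invokes \cite[Theorem~3.6]{RockafellarWets1998}, which for a nonempty \emph{closed convex} set identifies the horizon cone with the recession cone $\{y : \exists x \in C,\ x + ty \in C \ \forall t \ge 0\}$, and then both inclusions reduce to the one-line computation $L(x+ty) = w + tLy$ --- this is precisely the ``alternative route'' you sketch in your final sentences. Your main argument instead works directly from the sequential definition of the horizon cone \cite[Definition~3.3]{RockafellarWets1998}, producing the ray $v_0 + kd$ with scalars $1/k$ for one inclusion and using continuity of $L$ to pass to the limit $\lambda_k L v_k = \lambda_k w \to 0 = Ld$ for the other. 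What your route buys is marginally greater generality and self-containedness: it never needs the convexity or closedness of $C$ as hypotheses for the characterization (only nonemptiness, plus continuity of $L$, which is automatic in finite dimensions), whereas the paper's route is shorter but must implicitly note that $C$ is closed (as the preimage of $\{w\}$ under a continuous linear map) and convex before Theorem~3.6 applies. You are also right to flag nonemptiness as the one point of care: it is genuinely needed, since under the Rockafellar--Wets convention $\emptyset^\infty = \{0\}$, which would differ from $\ker(L)$ in general, and both your proof and the paper's use a base point $v_0 \in C$ (or $x \in C$) at exactly this juncture. In the paper's application this is harmless, as $C$ there is the discrete continuity-equation constraint set, nonempty under the feasibility assumption of Theorem~\ref{thm:existence_discrete_wfr}.
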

\begin{proof}
  Since $C$ is convex and closed, by Theorem 3.6 of \cite{RockafellarWets1998}, we have $C^{\infty} = \{y\in V: \exists x\in C, \forall t\geq 0, x+ty \in C\}$. Let $y\in C^{\infty}$, then there exists $x\in C$ and for all $t\geq 0$, we have $L(x+ty) = w + tLy = w$. This implies that $Ly=0$ i.e. $y\in \ker(L)$. Conversely, if $y\in \ker(L)$, then for any $x\in C$ and $t\geq 0$, we have $L(x+ty) = Lx + tLy = w + 0 = w$. This implies that $x+ty \in C$ and thus $y\in C^{\infty}$. Therefore, we have shown that $C^{\infty} = \ker(L)$.
\end{proof}
We now move to the main proof of Theorem \ref{thm:existence_discrete_wfr}:
\begin{proof}[Proof of Theorem~\ref{thm:existence_discrete_wfr}]
Let us first rewrite the objective function:
\begin{equation}
    F(U, V) = J(V) + \iota_{\mathcal{CE}}(U) + \iota_{\{V=I(U)\}}(U,V) + \sum_{i=1}^d \iota_{\mathcal{C}_i}(V)
\end{equation}
The main element of the proof is to show that the horizon function \cite[Definition 3.17]{RockafellarWets1998} of $F$, denoted $F^\infty$ in the following, is zero only when $U = 0, V = 0$. This will imply that $F$ is level-bounded by \cite[Theorem 3.26(a), Corollary 3.27]{RockafellarWets1998} since $F$ is proper, convex and lower semicontinuous. (Note that properness of $F$ i.e. that $F(U, V) <\infty$ for some $U, V$, follows from the feasibility assumption.) The existence of minimizers then follows from the fact that a lower-semicontinuous proper and level-bounded function attains a finite minimum \cite[Theorem 1.9]{RockafellarWets1998}. By the sum property of the horizon functions \cite[Exercise 3.29]{RockafellarWets1998}, and the fact that $\iota_{C}^{\infty}= \iota_{C^\infty}$, we have:
\begin{align}
F^{\infty} &\geq J^{\infty} + \iota_{\mathcal{CE}}^{\infty} + \iota_{\{V=I(U)\}}^{\infty} + \sum_{i=1}^d \iota_{\mathcal{C}_i}^{\infty} \\ &\geq J^{\infty} + \iota_{\mathcal{CE}^{\infty}} + \iota_{\{V=I(U)\}^\infty}
\end{align}
Note that $J^{\infty} = J$ since $f_\delta$ is positively homogeneous. Now, if $F^{\infty}(U, V)= 0$, then we must have $J(V) = 0, (U, V)\in \mathcal{CE}^{\infty}, (U, V) \in \{(U,V)|V=I(U)\}^{\infty}$. If we write $V = (\rho, \omega, \zeta)$, the definition of $f_\delta$ implies that $J(V)=0$ only when $\omega = \zeta = 0$. Now, by Proposition \ref{prop:horizon_cone_affine}, we have
\begin{align}
    &\mathcal{CE}^{\infty} = \{(\rho,\omega,\zeta): (\textrm{div} - s_z)(U) = 0, b(U) = 0\} \\
    &\{(U,V)|V=I(U)\}^{\infty} = \{(U,V): V = I(U)\}
\end{align}
If we write $U = (\bar{\rho}, \bar{\omega}, \bar{\zeta})$, the condition $(U,V)\in \{(U,V)|V=I(U)\}$ implies that $\zeta = 0$ too. Moreover, we must have $\bar{\omega}_{j_0, j_1 + 1} = -\bar{\omega}_{j_0, j_1}$ for all $j_0 \in \llbracket 0, N_0 -1 \rrbracket, j_1 \in \llbracket 0, N_1 - 1 \rrbracket$. On the other hand, since we must have $b(U) = 0$, we have $\bar{\omega}_{j_0, 0} = \bar{\omega}_{j_0, N_1} = 0$ for all $j_0 \in \llbracket 0, N_0 -1 \rrbracket$. This implies that $\bar{\omega} = 0$. Finally, the condition $(\textrm{div} - s_z)(U) = 0$, together with $\bar{\omega} = 0, \bar{\omega} = 0$, implies that
\begin{equation}
    \frac{\bar{\rho}_{j_0 + 1, j_1} - \bar{\rho}_{j_0, j_1}}{h_0} = 0 \textrm{ for all } j_0 \in \llbracket 0, N_0 -1 \rrbracket, j_1 \in \llbracket 0, N_1 - 1 \rrbracket
\end{equation}
Together with the boundary condition $b(U) = 0$, this implies that $\bar{\rho} = 0$. $V=I(U)$ then implies that $\rho=0$. Overall, we have $U = 0, V= 0$. Therefore, we have shown that $F^{\infty}(U, V) = 0$ only when $U = 0, V = 0$. This concludes the proof of solution's existence.
Now, regarding the feasibility condition, we see that $(U,V)$ belongs to the feasible set of problem \eqref{eq:disc-constr-wfr} if and only if it satisfies the set of conditions $(\textrm{div} - s_z)(U) = 0, \ b(U) = b_0, \ V = I(U)$ and $V \in \cap_{i=1}^d \mathcal{C}_i$, which are all affine equality and inequality constraints, and the condition that the energy $J(V) < +\infty$. The latter translates to $\rho_j >0$ or $(\rho_j,\omega_j,\zeta_j) = (0,0,0)$ for any $j=(j_0,j_1) \in \mathcal{G}_c$. This is equivalent to the existence of $\tau \geq 0$ such that for all $j$, $\|(\omega_j,\delta \zeta_j)\|_2^2 \leq 2\tau \rho_j$. Indeed, one can take $\tau$ to be the largest of the $\frac{\|(\omega_j,\delta \zeta_j)\|_2^2}{2\rho_j}$ over all $j$ such that $\rho_j>0$. We can rewrite each of these conditions as follows:
\begin{align}
    &\|(\omega_j,\delta \zeta_j)\|_2^2 \leq 2\tau \rho_j = (\tau+\rho_j)^2 -\tau^2 -\rho_j^2 \ \Leftrightarrow \|(\omega_j,\delta \zeta_j)\|_2^2 +\tau^2 + \rho_j^2 \leq (\tau+\rho_j)^2 \\
    &\Leftrightarrow \|(\rho_j,\omega_j,\delta \zeta_j,\tau)\|_2^2\leq (\tau+\rho_j)^2 
\end{align}
which is in turn equivalent to $\|(\rho_j,\omega_j,\delta \zeta_j,\tau)\|_2 \leq \tau + \rho_j$. Thus, the feasibility set of the problem can be expressed as an intersection of hyperplanes, halfspaces and second-order cone conditions on $(\rho,\omega,\zeta,\tau)$.  
\end{proof}
\subsection{Proof of Lemma~\ref{thm:symmetric_solution}}\label{sec:proof_lem_symmetric}
\begin{proof}
  From the results of \cite{chizat2018interpolating}, there exists a minimizer $(\rho,\omega,\zeta)$ to the standard WFR problem where $\rho,\omega,\zeta \in \mathcal{M}([0,1]\times \mathbb{S}^1)$. With Propositions 2.2 and 2.6 of \cite{Bredies_2020}, we can further assume that all three measures are disintegrable in time with respect to the Lebesgue measure. Let us then define the following: 
  \begin{align}
  &\bar{\rho} =  \frac{1}{2}(\rho_t + R_{*}\rho_t) \otimes dt\\
  &\bar{\omega}= \frac{1}{2}(\omega_t + R_{*}\omega_t) \otimes dt \\
  &\bar{\zeta} = \frac{1}{2}(\zeta_t + R_{*}\zeta_t) \otimes dt. 
  \end{align}
  where $dt$ represents the Lebesgue measure on $[0,1]$ and $\otimes$ represents the disintegration of measure. Due to the symmetry of $\rho_0$ and $\rho_1$, we see that $\bar{\rho}_0 = \rho_0$ and $\bar{\rho}_1=\rho_1$. In addition, for every $\phi \in \mathcal{C}^1([0,1]\times \mathbb{S}^1)$,
  \begin{align}
    &\int_{[0,1]\times \mathbb{S}^1} \partial_{t}\phi d\bar{\rho} + \int_{[0,1]\times \mathbb{S}^1}\partial_{\theta}\phi d\bar{\omega} +\int_{[0,1]\times \mathbb{S}^1}\phi d\bar{\zeta} \\ 
    &= \frac{1}{2} \left(\int_{[0,1]\times \mathbb{S}^1} \partial_{t}\phi d \rho + \int_{[0,1]\times \mathbb{S}^1} \partial_{\theta}\phi d\omega + \int_{[0,1]\times \mathbb{S}^1} \phi d \zeta \right) \\
    &+ \frac{1}{2} \left( \int_{0}^1 \int_{\mathbb{S}^1} \partial_{t}\phi d (R_{*}\rho_t) dt + \int_{0}^1 \int_{\mathbb{S}^1} \partial_{\theta}\phi d(R_{*}\omega_t) dt + \int_{0}^1 \int_{\mathbb{S}^1}\phi d (R_{*}\zeta_t) dt \right).
  \end{align}
  By assumption the upper term equals to $\frac{1}{2}\left(\int_{\mathbb{S}^1}\phi(1,\cdot)d\rho_1 - \int_{\mathbb{S}^1}\phi(0,\cdot)d\rho_0 \right)$. By a change of variable in the inner integrals, writing $\bar{\phi}(t,\theta) = \phi(t,\theta + \pi)$, we obtain that the second term becomes:
  \begin{align}
     &\frac{1}{2} \left(\int_{0}^1 \int_{\mathbb{S}^1} \partial_{t}\bar{\phi} d \rho_t dt + \int_{0}^1 \int_{\mathbb{S}^1} \partial_{\theta}\bar{\phi} d\omega_t dt + \int_{0}^1 \int_{\mathbb{S}^1} \bar{\phi} d\zeta_t dt  \right) \\
     &=\frac{1}{2} \left(\int_{\mathbb{S}^1}\bar{\phi}(1,\cdot)d\rho_1 - \int_{\mathbb{S}^1}\bar{\phi}(0,\cdot)d\rho_0   \right) =\frac{1}{2} \left(\int_{\mathbb{S}^1}\phi(1,\cdot)d\rho_1 - \int_{\mathbb{S}^1}\phi(0,\cdot)d\rho_0   \right) 
  \end{align}
  where the last equality follows from the fact that $\rho_0$ and $\rho_1$ are symmetric. It results that $(\bar{\rho},\bar{\omega},\bar{\zeta})$ satisfy the continuity equation. Furthermore, for a.e. $t\in[0,1]$, letting $\lambda_t$ be a measure such that $\bar{\rho}_t,\bar{\omega}_t,\bar{\zeta}_t \ll \lambda_t$, we have:
  \begin{align}
    &\int_0^1 \int_{\mathbb{S}^1} f_{\delta}\left(\frac{d\bar{\rho}_t}{d\lambda_t},\frac{d\bar{\omega}_t}{d\lambda_t},\frac{d\bar{\zeta}_t}{d\lambda_t} \right) d\lambda_t dt \\
    &\leq \frac{1}{2} \int_0^1 \int_{\mathbb{S}^1} f_{\delta}\left(\frac{d\rho_t}{d\lambda_t},\frac{d\omega_t}{d\lambda_t},\frac{d\zeta_t}{d\lambda_t} \right) d\lambda_t dt + \frac{1}{2} \int_0^1 \int_{\mathbb{S}^1} f_{\delta}\left(\frac{dR_{*}\rho_t}{d\lambda_t},\frac{dR_{*}\omega_t}{d\lambda_t},\frac{dR_{*}\zeta_t}{d\lambda_t} \right) d\lambda_t dt
  \end{align}
  which follows from the convexity of the function $f_{\delta}$. Note finally that, due to the invariances of the cost function, $(R_{*}\rho_t,R_{*}\omega_t,R_{*}\zeta_t) \otimes dt$ must be a minimizer for the WFR$_\delta$ distance between $R_{*}\rho_0 = \rho_0$ and $R_{*}\rho_1 = \rho_1$, and as a consequence both terms are equal to $\text{WFR}_{\delta}(\rho_0,\rho_1)^2/2$. We thus get that:
  \begin{equation}
    \int_0^1 \int_{\mathbb{S}^1} f_{\delta}\left(\frac{d\bar{\rho}_t}{d\lambda_t},\frac{d\bar{\omega}_t}{d\lambda_t},\frac{d\bar{\zeta}_t}{d\lambda_t} \right) d\lambda_t dt \leq \text{WFR}_{\delta}(\rho_0,\rho_1)^2 
  \end{equation}
  and thus $t \mapsto\bar{\rho}_t$ is a geodesic between $\rho_0$ and $\rho_1$ and is symmetric at all times.
\end{proof}

\begin{thebibliography}{MWH{\etalchar{+}}21}
\bibitem[AJVLS17]{ay2017information}
Nihat Ay, J{\"u}rgen Jost, H{\^o}ng V{\^a}n~L{\^e}, and Lorenz
  Schwachh{\"o}fer.
\newblock {\em Information geometry}, volume~64.
\newblock Springer, 2017.
\bibitem[Ama16]{amari2016information}
Shun-ichi Amari.
\newblock {\em Information geometry and its applications}, volume 194.
\newblock Springer, 2016.
\bibitem[Bau96]{bauschke1996projection}
Heinz~H. Bauschke.
\newblock {\em Projection Algorithms and Monotone Operators}.
\newblock PhD thesis, Simon Fraser University, Burnaby, BC, Canada, August
  1996.
\bibitem[BB00]{benamou_computational_2000}
Jean-David Benamou and Yann Brenier.
\newblock A computational fluid mechanics solution to the {Monge}-{Kantorovich}
  mass transfer problem.
\newblock {\em Numerische Mathematik}, 84(3):375--393, January 2000.
\bibitem[BBM16]{bauer2016uniqueness}
Martin Bauer, Martins Bruveris, and Peter~W Michor.
\newblock Uniqueness of the {F}isher--{R}ao metric on the space of smooth
  densities.
\newblock {\em Bulletin of the London Mathematical Society}, 48(3):499--506,
  2016.
\bibitem[BCNN25]{bauer2025path}
Martin Bauer, Nicolas Charon, Tom Needham, and Mao Nishino.
\newblock Path constrained unbalanced optimal transport.
\newblock {\em Nonlinearity}, 38(7):075019, 2025.
\bibitem[BF20]{Bredies_2020}
Kristian Bredies and Silvio Fanzon.
\newblock An optimal transport approach for solving dynamic inverse problems in
  spaces of measures.
\newblock {\em {ESAIM}: Mathematical Modelling and Numerical Analysis},
  54(6):2351--2382, nov 2020.

\bibitem[BJO09]{butazzo2009}
G.~Buttazzo, C.~Jimenez, and E.~Oudet.
\newblock An optimization problem for mass transportation with congested
dynamics.
\newblock {\em SIAM Journal on Control and Optimization}, 48(3):1961--1976,
2009.

\bibitem[BZ05]{borwein2005techniques}
J.M. Borwein and Q.J. Zhu.
\newblock {\em Techniques of Variational Analysis}.
\newblock CMS Books in Mathematics. Springer, 2005.
\bibitem[CGP25]{chen2025optimal}
Yongxin Chen, Tryphon~T Georgiou, and Michele Pavon.
\newblock {Optimal Survival Strategies for Diffusive Flows: A Schr{\"o}dinger
  Bridge Approach to Unbalanced Transport}.
\newblock {\em SIAM Review}, 67(3):579--604, 2025.
\bibitem[CP08]{combettes-pesquet-ppxa-2008}
Patrick~L. Combettes and Jean-Christophe Pesquet.
\newblock A proximal decomposition method for solving convex variational
  inverse problems.
\newblock {\em Inverse Problems}, 24(6):065014, 2008.
\bibitem[CP11]{combettes-pesquet-signal-2011}
Patrick~L. Combettes and Jean-Christophe Pesquet.
\newblock Proximal splitting methods in signal processing.
\newblock In Heinz~H. Bauschke, Regina~S. Burachik, Patrick~L. Combettes, Veit
  Elser, D.~Russell Luke, and Henry Wolkowicz, editors, {\em Fixed-Point
  Algorithms for Inverse Problems in Science and Engineering}, volume~49 of
  {\em Springer Optimization and Its Applications}, pages 185--212. Springer,
  New York, 2011.
\bibitem[CP21]{charon_length_2021}
Nicolas Charon and Thomas Pierron.
\newblock On length measures of planar closed curves and the comparison of
  convex shapes.
\newblock {\em Annals of Global Analysis and Geometry}, 60(4):863--901,
  November 2021.
\bibitem[CPSV18a]{chizat2018interpolating}
Lenaic Chizat, Gabriel Peyr{\'e}, Bernhard Schmitzer, and Fran{\c{c}}ois-Xavier
  Vialard.
\newblock An interpolating distance between optimal transport and
  {F}isher--{R}ao metrics.
\newblock {\em Foundations of Computational Mathematics}, 18:1--44, 2018.
\bibitem[CPSV18b]{ChizatEtAl2018JFA}
L{\'e}na{\"i}c Chizat, Gabriel Peyr{\'e}, Bernhard Schmitzer, and Fran{\c
  c}ois-Xavier Vialard.
\newblock {Unbalanced optimal transport: Dynamic and Kantorovich formulations}.
\newblock {\em Journal of Functional Analysis}, 274(11):3090--3123, 2018.
\bibitem[DD20]{delon2020wasserstein}
Julie Delon and Agnes Desolneux.
\newblock A wasserstein-type distance in the space of gaussian mixture models.
\newblock {\em SIAM Journal on Imaging Sciences}, 13(2):936--970, 2020.
\bibitem[DJH{\etalchar{+}}25]{ding2025swarmdiffswarmrobotictrajectory}
Kang Ding, Chunxuan Jiao, Yunze Hu, Kangjie Zhou, Pengying Wu, Yao Mu, and
  Chang Liu.
\newblock Swarmdiff: Swarm robotic trajectory planning in cluttered
  environments via diffusion transformer, 2025.
\bibitem[DR56]{douglas1956numerical}
Jim Douglas and Henry~H Rachford.
\newblock On the numerical solution of heat conduction problems in two and
  three space variables.
\newblock {\em Transactions of the American mathematical Society},
  82(2):421--439, 1956.
\bibitem[DS14]{dolinsky2014martingale}
Yan Dolinsky and H~Mete Soner.
\newblock Martingale optimal transport and robust hedging in continuous time.
\newblock {\em Probability Theory and Related Fields}, 160(1):391--427, 2014.
\bibitem[ES18]{ekren2018constrained}
Ibrahim Ekren and H~Mete Soner.
\newblock Constrained optimal transport.
\newblock {\em Archive for Rational Mechanics and Analysis}, 227(3):929--965,
  2018.
\bibitem[Fol13]{folland2013real}
G.B. Folland.
\newblock {\em Real Analysis: Modern Techniques and Their Applications}.
\newblock Pure and Applied Mathematics: A Wiley Series of Texts, Monographs and
  Tracts. Wiley, 2013.
\bibitem[Fri91]{friedrich1991fisher}
Thomas Friedrich.
\newblock Die {F}isher-information und symplektische strukturen.
\newblock {\em Mathematische Nachrichten}, 153(1):273--296, 1991.
\bibitem[FSFC21]{fatras2021unbalanced}
Kilian Fatras, Thibault S{\'e}journ{\'e}, R{\'e}mi Flamary, and Nicolas Courty.
\newblock Unbalanced minibatch optimal transport; applications to domain
  adaptation.
\newblock In {\em International conference on machine learning}, pages
  3186--3197. PMLR, 2021.
\bibitem[GM75]{glowinski1975approximation}
Roland Glowinski and Americo Marroco.
\newblock {Sur l'approximation, par {\'e}l{\'e}ments finis d'ordre un, et la
  r{\'e}solution, par p{\'e}nalisation-dualit{\'e} d'une classe de
  probl{\`e}mes de Dirichlet non lin{\'e}aires}.
\newblock {\em Revue fran{\c{c}}aise d'automatique, informatique, recherche
  op{\'e}rationnelle. Analyse num{\'e}rique}, 9(R2):41--76, 1975.
\bibitem[HY12]{HeYuan2012DRSRate}
Bingsheng He and Xiaoming Yuan.
\newblock {On the $O(1/n)$ Convergence Rate of the Douglas–Rachford
  Alternating Direction Method}.
\newblock {\em SIAM Journal on Numerical Analysis}, 50(2):700--709, 2012.
\bibitem[HY15]{HeYuan2015DRSRate}
Bingsheng He and Xiaoming Yuan.
\newblock {On the convergence rate of Douglas--Rachford operator splitting
  method}.
\newblock {\em Mathematical Programming}, 153(2):715--722, November 2015.
\bibitem[Ins22]{insee_pop_communes_1876_2020}
Insee.
\newblock Chiffres détaillés — séries historiques de population (1876 à
  2020).
\newblock Excel dataset; geography as of 2022-01-01, 2022.
\newblock Source: Insee, Recensements de la population.
\bibitem[JCLL24]{jing2024machine}
Yang Jing, Jiaheng Chen, Lei Li, and Jianfeng Lu.
\newblock A machine learning framework for geodesics under spherical
  wasserstein--fisher--rao metric and its application for weighted sample
  generation.
\newblock {\em Journal of Scientific Computing}, 98(1):5, 2024.
\bibitem[KBAH13]{kerrache2013optimal}
Said Kerrache, Hafida Benhidour, and Sahar Al-Homaidhi.
\newblock An optimal transport-based approach for crowd evacuation.
\newblock In {\em 2013 Fifth International Conference on Computational
  Intelligence, Modelling and Simulation}, pages 142--147. IEEE, 2013.
\bibitem[KM15]{korman2015optimal}
Jonathan Korman and Robert McCann.
\newblock Optimal transportation with capacity constraints.
\newblock {\em Transactions of the American Mathematical Society},
  367(3):1501--1521, 2015.
\bibitem[KN22]{kerrache2022constrained}
Said Kerrache and Yasushi Nakauchi.
\newblock Constrained mass optimal transport.
\newblock {\em arXiv preprint arXiv:2206.13352}, 2022.
\bibitem[LLN{\etalchar{+}}24]{liu2024generalized}
Guan~Horng Liu, Yaron Lipman, Maximilian Nickel, Brian Karrer, Evangelos~A
  Theodorou, and Ricky~TQ Chen.
\newblock {Generalized Schr{\"o}dinger Bridge Matching}.
\newblock In {\em 12th International Conference on Learning Representations
  (ICLR)}, 2024.
\bibitem[LM19]{laschos2019geometric}
Vaios Laschos and Alexander Mielke.
\newblock Geometric properties of cones with applications on the
  hellinger--kantorovich space, and a new distance on the space of probability
  measures.
\newblock {\em Journal of Functional Analysis}, 276(11):3529--3576, 2019.
\bibitem[LMS16]{LieroMielkeSavare2016}
Matthias Liero, Alexander Mielke, and Giuseppe Savar{\'e}.
\newblock {Optimal transport in competition with reaction: the
  Hellinger--Kantorovich distance and geodesic curves}.
\newblock {\em SIAM Journal on Mathematical Analysis}, 48(4):2869--2911, 2016.
\bibitem[LMS18]{LieroMielkeSavare2018Invent}
Matthias Liero, Alexander Mielke, and Giuseppe Savar{\'e}.
\newblock {Optimal Entropy--Transport problems and a new Hellinger--Kantorovich
  distance between positive measures}.
\newblock {\em Inventiones mathematicae}, 211(3):969--1117, 2018.
\bibitem[MRS10]{maury2010macroscopic}
B.~Maury, A.~Roudneff-Chupin, and F.~Santambrogio.
\newblock A macroscopic crowd motion model of gradient flow type.
\newblock {\em Mathematical Models and Methods in Applied Sciences},
20(10):1787--1821, 2010.
\bibitem[MWH{\etalchar{+}}21]{Ma2021Learning}
Zhiheng Ma, Xing Wei, Xiaopeng Hong, Hui Lin, Yunfeng Qiu, and Yihong Gong.
\newblock Learning to count via unbalanced optimal transport.
\newblock In {\em Proceedings of the AAAI Conference on Artificial Intelligence
  (AAAI)}, volume~35, pages 2319--2327, 2021.
\bibitem[NBT{\etalchar{+}}24]{neklyudov2024wlf}
Kirill Neklyudov, Rob Brekelmans, Alexander Tong, Lazar Atanackovic, Qiang Liu,
  and Alireza Makhzani.
\newblock {A Computational Framework for Solving Wasserstein Lagrangian Flows}.
\newblock In Ruslan Salakhutdinov, Zico Kolter, Katherine Heller, Adrian
  Weller, Nuria Oliver, Jonathan Scarlett, and Felix Berkenkamp, editors, {\em
  Proceedings of the 41st International Conference on Machine Learning}, volume
  235 of {\em Proceedings of Machine Learning Research}, pages 37461--37485.
  PMLR, Jul 2024.
\bibitem[NNZN23]{nguyen2023unbalanced}
Quang~Minh Nguyen, Hoang~H Nguyen, Yi~Zhou, and Lam~M Nguyen.
\newblock On unbalanced optimal transport: Gradient methods, sparsity and
  approximation error.
\newblock {\em Journal of Machine Learning Research}, 24(384):1--41, 2023.
\bibitem[Ott01]{OttoPic}
Felix Otto.
\newblock {The Geometry of Dissipative Evolution Equations: The Porous Medium
  Equation}.
\newblock {\em Communications in Partial Differential Equations},
  26(1-2):101--174, 2001.
\bibitem[PC{\etalchar{+}}19]{peyre2019computational}
Gabriel Peyr{\'e}, Marco Cuturi, et~al.
\newblock Computational optimal transport: With applications to data science.
\newblock {\em Foundations and Trends{\textregistered} in Machine Learning},
  11(5-6):355--607, 2019.
\bibitem[PPO14]{papadakis2014optimal}
Nicolas Papadakis, Gabriel Peyr{\'e}, and Edouard Oudet.
\newblock Optimal transport with proximal splitting.
\newblock {\em SIAM Journal on Imaging Sciences}, 7(1):212--238, 2014.
\bibitem[PR14]{PiccoliRossi2014}
Benedetto Piccoli and Francesco Rossi.
\newblock Generalized wasserstein distance and its application to transport
  equations with source.
\newblock {\em Archive for Rational Mechanics and Analysis}, 211:335--358,
  2014.
\bibitem[ROL{\etalchar{+}}20]{ruthotto2020machine}
Lars Ruthotto, Stanley~J Osher, Wuchen Li, Levon Nurbekyan, and Samy~Wu Fung.
\newblock A machine learning framework for solving high-dimensional mean field
  game and mean field control problems.
\newblock {\em Proceedings of the National Academy of Sciences},
  117(17):9183--9193, 2020.
\bibitem[RR45]{radhakrishna1945information}
C~Radhakrishna~Rao.
\newblock Information and accuracy attainable in the estimation of statistical
  parameters.
\newblock {\em Bulletin of the Calcutta Mathematical Society}, 37(3):81--91,
  1945.
\bibitem[RW98]{RockafellarWets1998}
R.~Tyrrell Rockafellar and Roger J.-B. Wets.
\newblock {\em Variational Analysis}, volume 317 of {\em Grundlehren der
  mathematischen Wissenschaften}.
\newblock Springer, Berlin, Heidelberg, 1998.
\bibitem[Sch14]{Schneider2014}
Rolf Schneider.
\newblock {\em Convex Bodies: The Brunn--Minkowski Theory}, volume 151 of {\em
  Encyclopedia of Mathematics and its Applications}.
\newblock Cambridge University Press, Cambridge, 2nd edition, 2014.
\newblock Second expanded edition.
\bibitem[SFV{\etalchar{+}}19]{sejourne2019sinkhorn}
Thibault S{\'e}journ{\'e}, Jean Feydy, Fran{\c{c}}ois-Xavier Vialard, Alain
  Trouv{\'e}, and Gabriel Peyr{\'e}.
\newblock Sinkhorn divergences for unbalanced optimal transport.
\newblock {\em arXiv preprint arXiv:1910.12958}, 2019.
\bibitem[SST{\etalchar{+}}19]{Schiebinger2019OptimalTransport}
Geoffrey Schiebinger, Jian Shu, Marcin Tabaka, Brian Cleary, Vidya Subramanian,
  Aryeh Solomon, Joshua Gould, Siyan Liu, Stacie Lin, Peter Berube, Lia Lee,
  Jenny Chen, Justin Brumbaugh, Philippe Rigollet, Konrad Hochedlinger, Rudolf
  Jaenisch, Aviv Regev, and Eric~S. Lander.
\newblock Optimal-transport analysis of single-cell gene expression identifies
  developmental trajectories in reprogramming.
\newblock {\em Cell}, 176(4):928–943.e22, 2019.
\newblock Epub 2019 Jan 31.
\bibitem[TRS{\etalchar{+}}24]{tang2024sinkhorn}
Xun Tang, Holakou Rahmanian, Michael Shavlovsky, Kiran~Koshy Thekumparampil,
  Tesi Xiao, and Lexing Ying.
\newblock A sinkhorn-type algorithm for constrained optimal transport.
\newblock {\em CoRR}, 2024.
\bibitem[Vil09]{villani2009optimal}
C{\'e}dric Villani.
\newblock {\em Optimal transport: old and new}, volume 338.
\newblock Springer, 2009.
\bibitem[WNP{\etalchar{+}}23]{wilson2023wasserstein}
Michael Wilson, Tom Needham, Chiwoo Park, Suprateek Kundu, and Anuj Srivastava.
\newblock A {W}asserstein-type distance for {G}aussian mixtures on vector
  bundles with applications to shape analysis.
\newblock {\em arXiv preprint arXiv:2311.16988}, 2023.
\bibitem[WPZ{\etalchar{+}}24]{wan2024neural}
Wei Wan, Jiangong Pan, Yuejin Zhang, Chenglong Bao, and Zuoqiang Shi.
\newblock A neural network framework for high-dimensional dynamic unbalanced
  optimal transport.
\newblock {\em arXiv preprint arXiv:2409.13188}, 2024.
\bibitem[WR24]{WuRantzer_OMT_Nonlinear_InputDensity_2024}
Dongjun Wu and Anders Rantzer.
\newblock Optimal mass transport of nonlinear systems under input and density
  constraints.
\newblock 2024.
\bibitem[WSZJ24]{wang2024outlierrobust}
Zifan Wang, Yi~Shen, Michael~M. Zavlanos, and Karl~Henrik Johansson.
\newblock Outlier-robust distributionally robust optimization via unbalanced
  optimal transport.
\newblock In {\em The Thirty-eighth Annual Conference on Neural Information
  Processing Systems}, 2024.
\bibitem[WZB{\etalchar{+}}23]{Wan2023ScalableDynamicOT}
Wei Wan, Yuejin Zhang, Chenglong Bao, Bin Dong, and Zuoqiang Shi.
\newblock {A Scalable Deep Learning Approach for Solving High-Dimensional
  Dynamic Optimal Transport}.
\newblock {\em SIAM Journal on Scientific Computing}, 45(4):B544--B563, 2023.
\end{thebibliography}
\end{document}